\font\emailfont=cmtt10
\newcommand\commentable[1]{#1}
\newcommand\Rk{\mathrm{rk}}
\newcommand\sd{\mbox{-}}
\newcommand{\lbra}{{\em (}}
\newcommand{\rbra}{{\em )}}
\newtheorem{theorem}{Theorem}[section]
\newtheorem{cor}[theorem]{Corollary}
\newtheorem{lemma}[theorem]{Lemma}
\newtheorem{defn}[theorem]{Definition}
\newtheorem{remark}[theorem]{Remark}
\newtheorem{question}[theorem]{Question}
\def\endproofof{\relax\ifmmode\expandafter\endproofmath\else
  \unskip\nobreak\hfil\penalty50\hskip.75em\hbox{}\nobreak\hfil\bull
  {\parfillskip=0pt \finalhyphendemerits=0 \bigbreak}\fi}
\def\endproofofmath$${\eqno\bull$$\bigbreak}
\def\endproof{\relax\ifmmode\expandafter\endproofmath\else
  \unskip\nobreak\hfil\penalty50\hskip.75em\hbox{}\nobreak\hfil\bull
  {\parfillskip=0pt \finalhyphendemerits=0 \bigbreak}\fi}
\def\endproofmath$${\eqno\bull$$\bigbreak}
\def\bull{\vbox{\hrule\hbox{\vrule\kern3pt\vbox{\kern6pt}\kern3pt\vrule}\hrule}}
\newcommand{\C}{\mathbb{C}}
\newcommand{\Z}{\mathbb{Z}}
\newcommand{\OneHalf}{\frac{1}{2}}
\newcommand{\cm}{\cdot}
\newcommand\SpinC{\mathrm{Spin}^c}
\newcommand\relspinc{\underline{\spinc}}
\newcommand\Filt{\mathcal F}
\newcommand\x{\mathbf x}
\newcommand\z{\mathbf z}
\newcommand\y{\mathbf y}
\newcommand\ModSphere{\ModFlow\left({\mathbb S}\longrightarrow 
\Sym^{g-1}(\Sigma_{1})\times \Sym^2(\Sigma_{2})\right)}
\newcommand\ModSpheres\ModSphere
\newcommand\CF{CF}
\newcommand\CFa{\widehat{CF}}
\newcommand\CFinf{CF^\infty}
\newcommand\HFa{\widehat{HF}}
\newcommand\UnparModSp{\widehat \ModSp}
\newcommand\UnparModFlow\UnparModSp
\newcommand\Mod\ModSp
\newcommand{\spinc}{\mathfrak s}
\newcommand\ModMaps{\mathcal M}
\newcommand\ModSp\ModMaps
\newcommand\spincrel\relspinc
\newcommand\CFK{CFK}
\newcommand\HFK{HFK}
\newcommand\CFKa{\widehat\CFK}
\newcommand\HFKa{\widehat\HFK}
\newcommand\Dual{\mathcal D}
\newcommand\Duality\Dual
\newcommand\ctau[1]{\tau^p(K,{#1})}
\newcommand\ctauprime[1]{\tau^p(K,{#1})}
\newcommand\cs[1]{s^p(K,{#1})}
\newcommand\FiltY{\Filt_\spinc}
\newcommand\Pos{\mathcal{P}}
\newcommand\ons{Ozsv{\'a}th and Szab{\'o}}
\newcommand\os{{Ozsv{\'a}th-Szab{\'o}}}
\title[{On knot Floer homology and cabling II}] 
{On knot Floer homology and cabling II}
\author[Matthew Hedden]{Matthew Hedden}
\address{Department of
Mathematics, Massachusetts Institute of Technology, MA \newline
\indent{\emailfont{mhedden@math.mit.edu}}}
\begin{document}

\begin{abstract}
We continue our study of the knot Floer homology invariants of cable knots. For large $|n|$, we prove that many of the filtered subcomplexes in the knot Floer homology filtration associated to the $(p,pn+1)$ cable of a knot,  $K$, are isomorphic to those of $K$.   This result allows us to obtain information about the  behavior of the  \os \ concordance invariant under cabling, which has geometric consequences for the cabling operation.  Applications considered include quasipositivity in the braid group, the knot theory of complex curves, smooth concordance, and lens space (or, more generally, L-space) surgeries.
\end{abstract}

\maketitle
\section{Introduction}

A powerful knot invariant was introduced by \ons \ in \cite{Knots} and independently by Rasmussen in his thesis, \cite{RasThesis}.  The invariant takes the form of the filtered chain homotopy type of a filtered chain complex.  The chain complex is the so-called  \os \ ``infinity" chain complex associated to a $\SpinC$ three-manifold,  $\CFinf(Y,\spinc)$, and the filtration of this chain complex is induced by a knot $K\subset Y$.   Definitions of the chain complex can be found in \cite{HolDisk} and the filtration induced by the knot is defined in \cite{Knots,RasThesis}. Throughout, we will work with null-homologous knots equipped with a fixed Seifert surface, $F$, though more general constructions are possible \cite{RationalSurgeries}. This paper, and its predecessor \cite{Cabling}, study the knot Floer filtration induced by a class of knots called cable knots.

Let $K$ be a knot.  Recall that the $(p,q)$ cable of  $K$, denoted $K_{p,q}$,  is a satellite knot  with pattern the $(p,q)$ torus knot, $T_{p,q}$. More precisely, $K_{p,q}$ is the image of a torus knot living on the boundary of a tubular neighborhood of $K$.\footnote{The depends on an identification of the tubular neighborhood with a solid torus which, in turn, comes from the longitude specified by a Seifert surface.}  Thus $p$ is the number of times $K_{p,q}$ traverses the longitudinal direction of $K$, and $q$ the meridional number.  Throughout, we will assume $p>0$.\footnote{There will be no loss of generality in doing this, since $K_{-p,-q}\simeq -K_{p,q}$ where  $-K_{p,q}$ is $K_{p,q}$ with reversed string orientation. Our invariants are not sensitive to this orientation.}  Our original motivation for studying cable knots and, more generally, satellite knots, lay in the fact that their complements decompose as the union of two three-manifolds joined along a torus, and hence provide a testing ground for the topological quantum field theoretic (TQFT) behavior of the Heegaard Floer invariants in $(2+1)$ dimensions.

The knot Floer filtration is, in fact, a $\Z\oplus\Z$ filtration of $\CFinf(Y,\spinc)$, and it is the filtered chain homotopy type of this $\Z\oplus\Z$ filtration which is the primary knot invariant coming from Heegaard Floer homology.   The existence of two independent $\Z$ filtrations allows one to define many auxilary knot invariants and in this paper we deal with a less robust invariant - the filtered chain homotopy type of the $\Z$-filtration of $\CFa(Y,\spinc)$ obtained by setting one of the $\Z$ filtrations equal $0$.  We denote this filtration by $\FiltY(K)$ so that we have the sequence of inclusions:
$$ 0=\FiltY(K,-i) \subseteq \FiltY(K,-i+1)\subseteq \ldots \subseteq
\FiltY(K,n)=\CFa(Y,\spinc).$$

 \noindent  The associated graded complexes of this filtration, $\frac{\FiltY(K,j)}{\FiltY(K,j-1)}$, will be denoted by $\CFKa_\spinc(Y,K, j)$, and their homology by $\HFKa_\spinc(Y,K, j)$.  The homology groups $\HFKa_\spinc(Y,K,j)$ are commonly referred to as the {\em knot Floer homology groups of $K\subset Y$}.  These groups were studied for the $(p,pn\pm1)$ cables of an arbitrary knot, $K\subset S^3$, in \cite{Cabling,MyThesis}.  In that paper a stabilization theorem was proved which provided a formula for the groups $\HFKa(S^3,K_{p,pn\pm1},i)$ in the case when the parameter $n$ was sufficiently large.  The formula required $H_*(\Filt(K))$ as input, while the output was merely the associated graded object; hence, there was a loss of information.   
Despite this loss of information the formulas and techniques of \cite{Cabling} have proved to be quite useful and, in particular, were implemented by \ons \ \cite{OSThurston} and Ni \cite{NiThurston} in the proof that link Floer homology detects the Thurston norm.

The purpose of this paper is to extend our knowledge of the filtered chain homotopy type of $\Filt(K_{p,pn\pm1})$ beyond the level of its associated graded object.  A primary motivation for this extension comes from the relationship between the filtered chain homotopy type of $\Filt(K)$ and the smooth four-ball genus of $K$, $g_4(K)$.  \ons \ \cite{FourBall} and Rasmussen \cite{RasThesis} define the following numerical invariant of a knot, $K\subset S^3$:
$$\tau(K)=\mathrm{min}\{i\in\Z|H_*(\Filt(K,i))\longrightarrow \HFa(S^3)\ \mathrm{is \ non\-trivial}\}.$$

\noindent It is shown that this invariant provides a lower bound for the four-ball genus,
$$|\tau(K)|\le g_4(K),$$
\noindent and that $\tau$ provides a homomorphism from the the smooth concordance group of knots, $\mathcal{C}$, to $\Z$.   Moreover, the above inequality is sharp for torus knots, providing a new proof of Milnor's famous conjecture \cite{Milnor1968} on the four-genera and unknotting numbers of this family. 

The main result of this paper is the following theorem which, for simplicity, we state for knots in the three-sphere.

\begin{theorem}
\label{thm:filt}
Let $K\subset S^3$ be a knot. Pick any $M\in \Z$. Then there exists a constant
$N>0$ so that $\forall \ n>N$, the following holds for each $j> M$:
$$      H_*(\Filt(K_{p,pn+1},pj+\frac{(pn)(p-1)}{2}-1))\cong H_*(\Filt(K,j-1)). $$
\noindent Furthermore, 
$$\begin{array}{ll}
	H_*(\Filt(K_{p,pn+1},pj+\frac{(pn)(p-1)}{2}-i))\cong &  \\ 
H_*(\Filt(K_{p,pn+1},pj+\frac{(pn)(p-1)}{2}-i-1)) & \forall \ i=2,\ldots,p-1. \\ 
\end{array}$$
In particular, 
$$\tau(K_{p,pn+1})=\left\{\begin{array}{ll}
        p \tau(K)+\frac{(pn)(p-1)}{2}+p-1 & {\text{or}} \\
      p \tau(K)+\frac{(pn)(p-1)}{2}. & \\ 
\end{array}
\right. $$
\end{theorem}

\noindent The theorem has an analogue for $n<0$ (stated in Section \ref{sec:proof}) which we use with the above to prove:

\begin{theorem}
	\label{thm:tau}

Let $K\subset S^3$ be a non-trivial knot, then the following inequality holds for all $n$,
$$  p \tau(K)+ \frac{(pn)(p-1)}{2}\ \  \le\tau(K_{p,pn+1})\le \ \ 
        p\tau(K)+\frac{(pn)(p-1)}{2} +p-1. $$

\noindent In the special case when $K$ satisfies $\tau(K)=g(K)$ we have the equality,
$$  \tau(K_{p,pn+1}) =  p \tau(K)+\frac{(pn)(p-1)}{2},  $$
\noindent whereas when $\tau(K)=-g(K)$ we have
$$  \tau(K_{p,pn+1}) =  p \tau(K)+\frac{(pn)(p-1)}{2} + p-1. $$

\end{theorem}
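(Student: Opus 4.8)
The plan is to reduce everything to controlling a single normalized quantity. Set
$f(n)=\tau(K_{p,pn+1})-p\tau(K)-\frac{(pn)(p-1)}{2}$,
so that the asserted inequality is precisely $0\le f(n)\le p-1$ for all $n$, and the two special cases assert $f\equiv 0$ and $f\equiv p-1$. Theorem~\ref{thm:filt} and its $n<0$ analogue already compute $f(n)\in\{0,p-1\}$ once $|n|$ is large; the new content is to fill in the finite range of intermediate $n$ that those theorems leave uncovered, and to select which of the two values occurs in the special cases. I would do both by bounding how $f$ changes with $n$.

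The key step is to show $f$ is non-increasing. Passing from $K_{p,pn+1}$ to $K_{p,p(n+1)+1}$ inserts one positive full twist $\Delta^2$ on the $p$ coherently oriented strands of the cable. Writing $\Delta^2\beta_n=\Delta\cdot(\Delta\beta_n)$ and switching the $\binom{p}{2}=\frac{p(p-1)}{2}$ crossings of the leading half-twist $\Delta$ (turning the positive half-twist into $\Delta^{-1}$) exhibits $K_{p,pn+1}$ as the result of changing $\frac{p(p-1)}{2}$ positive crossings of $K_{p,p(n+1)+1}$ into negative ones. Iterating the crossing-change inequality $\tau(K_-)\le\tau(K_+)\le\tau(K_-)+1$ gives $0\le \tau(K_{p,p(n+1)+1})-\tau(K_{p,pn+1})\le\frac{p(p-1)}{2}$. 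Since the normalizing term grows by exactly $\frac{p(p-1)}{2}$ per unit $n$, this forces $-\frac{p(p-1)}{2}\le f(n+1)-f(n)\le 0$, so $f$ is non-increasing. Combined with $f(n)\in\{0,p-1\}$ for $|n|$ large, for any $n$ we get $f(m_+)\le f(n)\le f(m_-)$ with $m_+\gg 0$, $m_-\ll 0$ and $f(m_\pm)\in\{0,p-1\}$, whence $0\le f(n)\le p-1$. This is the two-sided bound.

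For the special cases I would bring in the Seifert genus. Genus is additive under cabling, so $g(K_{p,pn+1})=pg(K)+\frac{(pn)(p-1)}{2}$ for $n\ge 0$, and $|\tau|\le g$ gives $\tau(K_{p,pn+1})\le pg(K)+\frac{(pn)(p-1)}{2}$. When $\tau(K)=g(K)$ this upper bound equals the lower bound just established, forcing $f(n)=0$ for $n\ge 0$, i.e. $f=0$ at the positive extreme. To conclude $f\equiv 0$ it then suffices, by the monotonicity above together with $f\ge 0$, to know $f=0$ also as $n\to-\infty$; this is where one invokes the refined output of the $n<0$ analogue of Theorem~\ref{thm:filt} in the presence of $\tau(K)=g(K)$ (equivalently, the nontriviality of the top group $\HFKa(K,g(K))$). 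The case $\tau(K)=-g(K)$ is then the mirror statement: apply the above to $\overline K$, which satisfies $\tau(\overline K)=g(\overline K)$, and use $\tau(\overline L)=-\tau(L)$ together with $\overline{K_{p,pn+1}}=\overline K_{p,\,p(-n)-1}$ to convert the $\tau=g$ conclusion for $(p,\cdot-1)$ cables into the present one.

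The main obstacle is the negative extreme in the special cases. The Seifert genus estimate is sharp only on the side where the torus-knot part $T_{p,pn+1}$ is positive, namely $n\ge 0$; for $n\ll 0$ that part becomes a negative torus knot and the genus bound overshoots the target by a fixed amount, so it cannot by itself exclude $f=p-1$ there. Indeed for the unknot, where $g=0$, one genuinely has $f=p-1$ for $n\ll 0$, so the argument must exploit nontriviality. Pinning $f=0$ as $n\to-\infty$ therefore requires the full strength of the filtered isomorphism in the analogue of Theorem~\ref{thm:filt}, read off the top of $K$'s own filtration, rather than any soft genus inequality, and arranging the grading shifts to land exactly on the lower value is the delicate point; once this is in place, the monotonicity does the rest.
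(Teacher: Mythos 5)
Your normalization $f(n)$ and your proof of the two-sided inequality coincide with the paper's argument: the crossing-change bound \eqref{eq:crossing}, applied to the $\frac{p(p-1)}{2}$ crossings of a half twist on $p$ strands, gives \eqref{eq:ninequality}, and this interpolates between the large-$|n|$ regimes supplied by Theorems \ref{thm:filt} and \ref{thm:filtYneg}. Your genus argument forcing $f(n)=0$ for $n\ge 0$ when $\tau(K)=g(K)$ is a pleasant soft alternative that the paper does not use. The problem is the step you yourself flag as ``the delicate point'': determining $f$ at the remaining extreme ($n\to-\infty$ when $\tau(K)=g(K)$, and the analogous extreme in the other case). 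You assert this follows from ``the full strength of the filtered isomorphism in the analogue of Theorem~\ref{thm:filt},'' but it does not: Theorem \ref{thm:filtYneg} computes $H_*(\Filt(K_{p,pn+1},m))$ only for $m$ in certain residue classes and leaves the filtration levels between $p\tau(K)+\frac{(pn)(p-1)}{2}$ and $p\tau(K)+\frac{(pn)(p-1)}{2}+p-1$ undetermined --- that is exactly why its conclusion is a two-valued dichotomy for $\tau$ rather than an equality, and no re-reading of that theorem selects the branch. The paper's mechanism is a genuinely different input: the Maslov-graded stabilization formula for the associated graded groups (Theorem \ref{thm:largen}, resp.\ Theorem $3.8$ of the predecessor), which shows for instance that $\HFKa(K_{p,pn+1},-pg+\frac{(p-1)(pn)}{2})\cong\Z$ supported in Maslov grading $-2g\neq 0$ for a non-trivial knot, so $\tau$ of the cable cannot occur at that Alexander grading. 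This excludes one branch of the dichotomy at one extreme, after which the monotonicity of $f$ propagates the answer to all $n$. Your proposal contains no substitute for this Maslov-grading input, so the special cases are not proved.

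A secondary issue: your reduction of the case $\tau(K)=-g(K)$ to the case $\tau(K)=g(K)$ by mirroring sends a $(p,pn+1)$ cable to a $(p,-pn-1)$ cable, whose second parameter is congruent to $-1$, not $+1$, modulo $p$. Every theorem available in this paper (\ref{thm:filt}, \ref{thm:filtYneg}, \ref{thm:largen}) concerns coefficients $\equiv 1 \pmod p$, so the mirror route requires the $(p,pn-1)$ analogues, which you would have to establish separately. The paper instead treats $\tau(K)=-g(K)$ directly and symmetrically: Theorem \ref{thm:largen} at large positive $n$ rules out the lower value $y^-(n)$, forcing $\ctauprime{n}=y^+(n)$ there, and the monotonicity lemma again does the rest.
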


\begin{remark} Here, and throughout, $g(K)$ denotes the Seifert genus of $K$.  We emphasize that while Theorem \ref{thm:filt} requires the cabling parameter $n$ to be sufficiently large, there is no restriction on $n$ in the statement of Theorem \ref{thm:tau}. 
\end{remark}

Recently, we generalized $\tau(K)$ to a sequence of invariants of a knot $K\subset Y$ in an arbitrary $3$-manifold \cite{tbbounds}.   For each non-vanishing Floer homology class, $\alpha\in \HFa(Y,\spinc)$ we obtain an integer, $\tau_\alpha(Y,K)$.  Theorem \ref{thm:filt} is a special case of Theorem \ref{thm:filtY}, found in the next section.  This latter theorem holds for knots in arbitrary manifolds and we use this more general result to obtain information for each $\tau_\alpha(Y,K)$. 

Of particular interest is the case when $\alpha$ is the \os \ contact invariant, $c(\xi)\in \HFa(-Y)$.  In this case $\tau_\alpha(Y,K)$  provides upper bounds for the classical framing invariants of Legendrian and transverse representatives of $K$ in the contact structure $\xi$.  Indeed, using our present results, Theorem $1.4$ of \cite{tbbounds} was able to provide the first systematic construction of prime knot types in many tight contact structures  whose classical framing invariants are constrained to be arbitrarily negative.

\subsection{Geometric Consequences}
In addition to the theory of Legendrian knots and the connection with the four-ball genus, the Floer invariants of cable knots can be used in several other contexts.  We take some time to discuss these results.  
\subsubsection{Concordance information}
It is straightforward to see that cabling induces a well-defined operation on the smooth concordance group, $\mathcal{C}$. Indeed, if $K$ and $J$ are concordant, their $(p,q)$ cables will be concordant via a concordance which ``follows along" the concordance between $K$ and $J$ (see, for instance, \cite{Kawauchi1980} for more details).   Thus cabling defines a sequence of maps:
$$\phi_{p,q}: \mathcal{C}\rightarrow \mathcal{C},$$
where $\phi_{p,q}([K])=[K_{p,q}]$ (here $[K]$ denotes the smooth concordance class of $K$).   By pre-composing with $\phi_{p,q}$, it follows that any smooth concordance invariant provides a sequence of smooth concordance invariants.  In the present context, we obtain functions $$ \ctau{n}:= \tau \circ \phi_{p,pn+1}([K]) = \tau(K_{p,pn+1}).$$

 Theorem \ref{thm:tau} shows that $\ctau{n}$ is a piecewise linear  function of $n$.  Indeed, the graph of $\ctau{n}$ lies entirely on the lines of slope $\frac{(p-1)(p)}{2}$ whose $y$ intercepts range between $p\tau(K)$ and $p\tau(K)+p-1$.   Moreover, a crossing change inequality for $\tau$ (Equation \eqref{eq:crossing})  shows that $\ctau{n}$ is monotonically decreasing.  This indicates that $\ctau{n}$  has a  finite set of discontinuities: 
$$J_\tau^p(K)= \{ n\in \Z | \ \ctau{n} \ne \ctau{n+1}-\frac{(p-1)(p)}{2} \}.$$
The cardinality of this ``jumping locus"  is at most $p-1$ and may be zero.  For instance, Theorem  \ref{thm:tau} indicates $J_p(K)=\emptyset$ if $\tau(K)=\pm g(K)\ne 0$. On the other hand, for the unknot we have $J_p(\text{unknot})=\{-1\}$.   

One should compare this with recent work of Van Cott \cite{VanCott2008}, which uses formal properties of $\tau$ to reprove the fact that $\ctau{n}$ is bounded between two lines of slope $\frac{(p-1)p}{2}$.  Her results extend to $(p,q)$ cables, but are unable to recover the possible $y$ intercepts of the lines which bound the graph of $\ctau{n}$, showing only that they differ by $p-1$.

In light of this, one might hope that formal properties of $\tau$ could be pushed further to reprove Theorem \ref{thm:tau} without having to understand the Floer chain complexes.   This seems unlikely, due to the fact that the techniques of \cite{VanCott2008} can also be employed in the study of the Rasmussen concordance invariant, $s(K)$.  This latter invariant is defined using Khovanov homology \cite{RasSlice}, and while it shares several important properties of $\tau$ the two invariants are independent \cite{Stau}.   Indeed, we expect the behavior of the corresponding functions $\cs{n}$ to be quite different from that of $\ctau{n}$.  Our intuition comes from the fact that the Alexander polynomial of cable knots is determined by the formula \begin{equation}
\label{eq:satpoly}
\Delta_{K_{p,q}}(t)=\Delta_{T_{p,q}}(t)\cm\Delta_K(t^p).
\end{equation}
On the other hand, there cannot exist a formula which computes the Jones polynomial of cables of $K$ in terms of the Jones polynomial of $K$.  Since knot Floer homology and Khovanov homology categorify the Alexander and Jones polynomial, respectively, we expect any invariant derived from these theories, e.g. $\tau$ and $s$, to have quite different behavior under cabling.  For this reason, we expect invariants derived from $s$ of cables to be very interesting new concordance invariants, and a pursuit of effective means of computation is well-motivated. In fact, one can ask: 

\begin{question} Does  the Rasmussen concordance invariant, applied to all iterated cables of $K$, determine if $K$ is smoothly slice? 
\end{question}
 
This question has many variants obtained by using other satellite operations or asking for more refined concordance information (see, for instance, \cite{Doubling,LN2008}).  One can also ask the question where $s(K)$ is replaced by $\tau(K)$ (or any other smooth concordance invariant).  The results of the present paper, together with further expected TQFT properties of \os \ Floer homology lead us to conjecture that in this case the answer is ``no".

 In a related direction, we point out that the $(p,1)$ cabling operation $\phi_{p,1}$ was studied in detail by Kawauchi in  \cite{Kawauchi1980}. He showed that  $\phi_{p,1}$ is a homomorphism from the algebraic concordance group to itself, and remarked that it does not appear to be so on the level of the (smooth) concordance group in general (see the parenthetical remark at the end of the proof of Proposition $4.1$ of \cite{Kawauchi1980}).  Using Theorem \ref{thm:tau} we are able to prove that this is so.

\begin{cor}\label{cor:concordance}
Let $\mathcal{C}$ denote the smooth concordance group, and let $$\phi_{p,1}: \mathcal{C}\rightarrow \mathcal{C}$$ denote the map  defined by $\phi_{p,1}([K])=[K_{p,1}]$.  Then $\phi_{p,1}$ is not a homomorphism for any $p$
\end{cor}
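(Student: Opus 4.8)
The plan is to leverage the fact that $\tau\colon\mathcal{C}\to\Z$ is a group homomorphism together with the two distinct $y$-intercepts appearing in Theorem \ref{thm:tau}. If $\phi_{p,1}$ were a homomorphism, then for any knots $K$ and $J$ we would have $[(K\# J)_{p,1}]=[K_{p,1}]+[J_{p,1}]$ in $\mathcal{C}$; applying the homomorphism $\tau$ would then force
$$\tau((K\#J)_{p,1})=\tau(K_{p,1})+\tau(J_{p,1}).$$
So it suffices to exhibit a single pair $K,J$ violating this identity. The key observation is that Theorem \ref{thm:tau}, evaluated at $n=0$ (where it places no restriction on the cabling parameter), pins $\tau(K_{p,1})$ to the \emph{lower} value $p\tau(K)$ when $\tau(K)=g(K)$, and to the \emph{upper} value $p\tau(K)+p-1$ when $\tau(K)=-g(K)$; these differ by $p-1$, and this is precisely the discrepancy I will exploit.

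Concretely, first I would take $K$ to be a knot with $\tau(K)=g(K)>0$ — the right-handed trefoil $T_{2,3}$ is the simplest choice — and let $J$ be a concordance inverse of $K$ (for instance its reverse mirror image), so that $\tau(J)=-\tau(K)$ and $g(J)=g(K)$, whence $\tau(J)=-g(J)$. Applying the two special cases of Theorem \ref{thm:tau} with $n=0$ gives
$$\tau(K_{p,1})=p\,\tau(K),\qquad \tau(J_{p,1})=p\,\tau(J)+p-1=-p\,\tau(K)+p-1,$$
so that $\tau(K_{p,1})+\tau(J_{p,1})=p-1$.

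Next I would compute the left-hand side independently. Since $K\#J$ is smoothly slice, it is concordant to the unknot $U$, and because cabling descends to a well-defined operation on $\mathcal{C}$ (as recalled in the discussion preceding this corollary), $(K\#J)_{p,1}$ is concordant to $U_{p,1}$. But $U_{p,1}=T_{p,1}$ is unknotted, so $\tau((K\#J)_{p,1})=\tau(U)=0$. Comparing the two sides, $0=\tau((K\#J)_{p,1})\ne \tau(K_{p,1})+\tau(J_{p,1})=p-1$ for every $p\ge 2$ (the case $p=1$ being the identity map $K_{1,1}=K$), which contradicts the displayed homomorphism identity and proves the corollary.

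The deduction is short, and the one place deserving care is the exact evaluation $\tau((K\#J)_{p,1})=0$: the bounds of Theorem \ref{thm:tau} by themselves only give $0\le \tau((K\#J)_{p,1})\le p-1$, which would \emph{not} separate the two sides. The main (small) obstacle is therefore recognizing that one must invoke the concordance invariance of $\tau$ together with the well-definedness of $\phi_{p,1}$ on $\mathcal{C}$ — rather than merely the inequality of Theorem \ref{thm:tau} — in order to force this value to be exactly zero.
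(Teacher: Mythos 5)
Your proposal is correct and follows essentially the same route as the paper: the paper also takes the right-handed trefoil and its concordance inverse (the left-handed trefoil), applies the two equality cases of Theorem \ref{thm:tau} at $n=0$ to get $\tau(K_{p,1})+\tau(J_{p,1})=p+(-1)=p-1$, and uses sliceness of the connected sum to conclude $\tau((K\#J)_{p,1})=0$. Your added remarks on why the exact value $0$ needs concordance invariance rather than the inequality, and on excluding $p=1$, are correct but not points the paper dwells on.
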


\bigskip

\subsubsection{Cable knots and complex curves} 
\ Let $V_f$ be a complex curve $$V_f=\{ (z,w)\in \C^2| 0=f(z,w)\in \mathbb{C}[z,w]\},$$ and let $$S^3=\{(z,w)\subset \mathbb{C}^2| |z|^2+|w|^2=1\}$$ be the three-sphere.  Further suppose that 
$$K=V_f \cap S^3$$
\noindent is a transverse intersection.  In this case, $K\subset S^3$ is a knot or link,  and we call knots that arise in this way (transverse) {\em $\C$-knots} (see \cite{Rudolph2005} for a thorough introduction to these knots).  

It is well-known that some iterated cables of the unknot are $\C$-knots.  Indeed, the class of cabled $\C$-knots contains the so-called {\em links of singularities}, which come from complex curves with a single isolated singularity at the origin.  In fact, the links of singularities are precisely the iterated cables of the unknot satisfying a positivity condition, see \cite{EN1985} for a discussion.  A notable feature of the link of a singularity is that its Milnor fiber (a smoothing of the singular complex curve contained in the four-ball \cite{Milnor1968}) can be isotoped into the three-sphere to be a Seifert surface for the knot.  

In light of these classical results,  a natural question to ask is to what extent cabling can be performed in the complex category.  That is, when can a cable knot be a  $\C$-knot? If it is, when is the piece of the complex curve contained in the four-ball  isotopic to a Seifert surface?  To this end, our results provide the following obstructions:

\begin{cor}
	\label{cor:cor1}
Suppose that $K_{p,pn+1}$ is a $\C$-knot. Then $n\ge -2(\frac{\tau(K)}{p-1}+ \frac{1}{p})$. \end{cor}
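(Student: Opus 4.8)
The plan is to combine the \emph{upper} bound on $\tau(K_{p,pn+1})$ supplied by Theorem~\ref{thm:tau} with the single positivity input that the $\tau$-invariant of any $\C$-knot is non-negative. Once both ingredients are available, the asserted inequality drops out of elementary algebra, so the proof is essentially a packaging of Theorem~\ref{thm:tau} together with an imported fact about complex curves.

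First I would record the positivity input. By Rudolph's theorem, together with its converse due to Boileau--Orevkov, a knot or link is a transverse $\C$-knot precisely when it is quasipositive; note also that $\gcd(p,pn+1)=1$, so $K_{p,pn+1}$ is genuinely a knot (an honest $\C$-knot rather than a multi-component $\C$-link). A quasipositive knot $L$ bounds a piece of complex curve $V_f\cap B^4$, which as a (pseudo\sd)holomorphic, hence symplectic, surface realizes the smooth four-ball genus; via the slice--Bennequin inequality for $\tau$ (equivalently the identity $\tau(L)=g_4(L)$ for quasipositive knots), this yields
$$\tau(K_{p,pn+1})=g_4(K_{p,pn+1})\ge 0.$$
Equivalently, one may argue through the natural transverse representative of the $\C$-knot, whose self-linking number is $2g_4-1$, and then invoke the bound $sl\le 2\tau-1$.

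Next I would feed this into the upper bound of Theorem~\ref{thm:tau}, which holds for every $n$ with no largeness hypothesis, obtaining
$$0\le \tau(K_{p,pn+1})\le p\tau(K)+\frac{(pn)(p-1)}{2}+p-1.$$
Hence $0\le p\tau(K)+\frac{(pn)(p-1)}{2}+p-1$; rearranging gives $\frac{p(p-1)}{2}\,n\ge -p\tau(K)-(p-1)$, and dividing by the positive quantity $\frac{p(p-1)}{2}$ produces
$$n\ge \frac{-2p\tau(K)-2(p-1)}{p(p-1)}=-\frac{2\tau(K)}{p-1}-\frac{2}{p}=-2\left(\frac{\tau(K)}{p-1}+\frac{1}{p}\right),$$
which is exactly the claimed estimate. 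The arithmetic is routine, so the genuine content is entirely imported: the main obstacle is establishing $\tau(K_{p,pn+1})\ge 0$, which rests on the identification of $\C$-knots with quasipositive knots and on the sharp behavior of $\tau$ (equivalently $g_4$) under that positivity. I expect no difficulty in the algebra or in applying Theorem~\ref{thm:tau} itself.
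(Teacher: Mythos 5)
Your proposal is correct and follows essentially the same route as the paper: the paper first records (as an unnamed corollary of the upper bound in Theorem~\ref{thm:tau}) that $\tau(K_{p,pn+1})\ge 0$ forces $n\ge -2(\frac{\tau(K)}{p-1}+\frac{1}{p})$, and then invokes the fact that $\C$-knots satisfy $\tau=g_4\ge 0$. Your algebra and your positivity input (quasipositivity of $\C$-knots and $\tau=g_4$ for such knots) match the paper's argument exactly.
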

\begin{cor}\label{cor:cknot}
Suppose  $K_{p,pn+1}$ is a $\C$-knot with defining complex curve, $V_f$.  Further, suppose the genus of the piece of $V_f$ contained in the four-ball is equal to the Seifert genus, $g(K)$.   Then $n\ge 0$ and $\tau(K)=g(K)$.
\end{cor}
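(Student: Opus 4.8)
The plan is to translate the two complex-geometric hypotheses into the single Floer-theoretic identity $\tau(K_{p,pn+1}) = g(K_{p,pn+1})$ and then play this against the two-sided estimate of Theorem \ref{thm:tau} and the Seifert genus formula for cables; the scheme parallels the proof of Corollary \ref{cor:cor1}, but now the genus hypothesis forces an equality rather than merely an inequality.

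First I would record the two standard facts about $\C$-knots used implicitly in Corollary \ref{cor:cor1}. Since $K_{p,pn+1} = V_f\cap S^3$ is a transverse $\C$-knot it is quasipositive, and for quasipositive knots the slice--Bennequin inequality is sharp for $\tau$, so $\tau(K_{p,pn+1}) = g_4(K_{p,pn+1})$. Simultaneously, the piece $\Sigma = V_f\cap B^4$ is a smoothly embedded complex curve with boundary $K_{p,pn+1}$, so by the local Thom conjecture of Kronheimer and Mrowka it is genus-minimizing in the four-ball, $g(\Sigma) = g_4(K_{p,pn+1})$. The hypothesis that $\Sigma$ realizes the Seifert genus of the cable therefore collapses the chain of (in)equalities to the identity
$$\tau(K_{p,pn+1}) \;=\; g_4(K_{p,pn+1}) \;=\; g(\Sigma) \;=\; g(K_{p,pn+1}).$$

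Next I would substitute the cable genus formula $g(K_{p,pn+1}) = p\,g(K) + \tfrac{(p-1)(|pn+1|-1)}{2}$ into the upper half of Theorem \ref{thm:tau}. Cancelling the common term and regrouping, the upper bound becomes
$$p\big(g(K)-\tau(K)\big)\;\le\;\tfrac{p-1}{2}\big(pn-|pn+1|+3\big).$$
For $n\ge 0$ the right-hand side is exactly $p-1$, and since $g(K)-\tau(K)$ is a non-negative integer this forces $\tau(K)=g(K)$. For $n\le -1$ the right-hand side equals $(p-1)(pn+2)$, which is non-positive, so non-negativity of $g(K)-\tau(K)$ already gives $pn+2\ge 0$; this contradicts $n\le -1$ once $p\ge 3$, while the single remaining case $p=2,\ n=-1$ is eliminated by noting that there $g(T_{2,-1})=0$ makes $g(K_{2,-1})=2g(K)$, incompatible with the value $2g(K)-1$ that the special ($\tau(K)=g(K)$) case of Theorem \ref{thm:tau} assigns to $\tau(K_{2,-1})$. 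Hence $n\ge 0$ and $\tau(K)=g(K)$, as claimed.

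The arithmetic is routine once the reduction is in place, so the real work --- and the main obstacle --- is securing the two geometric inputs in the correct generality: that a transverse $\C$-knot is quasipositive with $\tau=g_4$, and that the complex curve piece $\Sigma$ is genus-minimizing relative to its boundary. Everything else is bookkeeping, the only delicate point being the sign of $pn+1$ inside the cable genus formula, which is precisely what separates positive from negative cables and produces the constraint $n\ge 0$. (Here I read ``the Seifert genus'' as that of the cable $K_{p,pn+1}$ itself, in keeping with the motivating example of links of singularities, whose Milnor fibre is isotopic to a Seifert surface for the link.)
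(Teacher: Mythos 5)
Your proof is correct and follows essentially the same route as the paper: both reduce the hypotheses to the single identity $\tau(K_{p,pn+1})=g(K_{p,pn+1})$ (the paper cites \cite{SQPfiber} for the fact that a knot bounding a complex curve of genus equal to its Seifert genus lies in $\Pos$) and then combine Theorem \ref{thm:tau} with Schubert's genus formula for cables, which the paper packages as Corollary \ref{cor:pos}. Your reading of ``the Seifert genus'' as that of the cable is also the intended one, and your arithmetic, including the edge case $p=2$, $n=-1$, checks out.
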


Thus, for instance, no negative cable (i.e.  $n<0$) of a knot with $\tau(K)<0$ (e.g. the left-handed trefoil) will ever be a $\C$-knot.  Note, too, that since $\tau(K)\le g_4(K)$, Corollary \ref{cor:cor1} could also be phrased as a (weaker) obstruction which depends solely on the smooth four-genus of $K$. 

Another feature of links of singularities is that they are fibered and, as alluded to above, there is an isotopy taking their fiber surface to their Milnor fiber. Restricting to the category of fibered knots whose fiber surface is isotopic to a piece of a complex curve, we have the following characterization theorem:

\begin{theorem} \lbra Corollary $1.4$ of \cite{ComplexCable}\rbra \label{thm:ComplexCable}
Let  $K$ be a fibered knot. Then $K_{p,q}$ has a Seifert surface which is isotopic to a piece of a complex curve $V_f \cap B^4$ if and only if
\begin{itemize}
\item $K$ has a  Seifert surface which is isotopic to a piece of a complex curve {\em and}
\item $q>0$
\end{itemize}
\noindent In particular, the fiber surface of an iterated cable of the unknot is isotopic to a piece of a complex curve if and only if all the cabling coefficients are positive.
\end{theorem}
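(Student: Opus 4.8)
The plan is to translate the complex-analytic hypothesis into quasipositivity, prove the two implications by a four-dimensional construction and a $\tau$-obstruction respectively, and then bootstrap to iterated cables. First I would invoke Rudolph's theorem together with the converse of Boileau and Orevkov (see \cite{Rudolph2005}): a Seifert surface is isotopic to a piece of a complex curve $V_f\cap B^4$ if and only if it is a \emph{quasipositive} surface. Since a quasipositive surface minimizes genus in $B^4$ (by the slice--Bennequin inequality) while the fiber of a fibered knot is its unique minimal-genus Seifert surface, for fibered $K$ this reduces the theorem to the assertion that the fiber of $K_{p,q}$ is quasipositive if and only if the fiber of $K$ is quasipositive and $q>0$.

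For the ``if'' direction I would argue constructively. When $q>0$ the pattern torus knot $T_{p,q}$ is a positive braid in the solid torus, so if $K$ bounds a quasipositive fiber one assembles a surface for $K_{p,q}$ out of $p$ parallel pushoffs of that fiber plumbed with the braided page of $T_{p,q}$; Rudolph's calculus of quasipositive plumbings and satellites shows the result is again quasipositive, hence a piece of a complex curve. This step uses positivity of $q$ in an essential way and is entirely classical, requiring none of the Floer machinery.

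For the ``only if'' direction I would bring in $\tau$ and Theorem \ref{thm:tau}. A knot bounding a quasipositive surface $\Sigma$ satisfies $\tau=g(\Sigma)=g_4$; applied to the quasipositive fiber of the cable this gives $\tau(K_{p,q})=g(K_{p,q})$. Specializing to $q=pn+1$, where $g(K_{p,pn+1})=p\,g(K)+\tfrac{(pn)(p-1)}{2}$, the upper bound of Theorem \ref{thm:tau} forces $p\,g(K)\le p\,\tau(K)+(p-1)$; as $\tau(K)\le g(K)$ and both sides are integers this yields $\tau(K)=g(K)$, while the $n<0$ analogue quoted in Section \ref{sec:proof} rules out $q\le 0$. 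This is the quantitative content already recorded in Corollaries \ref{cor:cor1} and \ref{cor:cknot}. To upgrade the numerical equality $\tau(K)=g(K)$ to the geometric statement that the fiber of $K$ is genuinely quasipositive, I would descend at the level of the fibered satellite: the cabled page is built from $p$ embedded copies of the fiber of $K$, and I would show that a positive factorization of the cabled monodromy restricts to one on these copies. The ``in particular'' for iterated cables of the unknot then follows by induction, the disk being trivially quasipositive and each cabling step preserving quasipositivity exactly when its coefficient is positive.

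The main obstacle is precisely this last descent of quasipositivity from the satellite $K_{p,q}$ to its companion $K$. The invariant $\tau$ only delivers $\tau(K)=g(K)$ --- equivalently, by the fibered characterization that $\tau(K)=g(K)$ iff $K$ supports the tight contact structure $\xi_{std}$ --- and merely supporting a fillable structure is strictly weaker than exhibiting the fiber of $K$ as a quasipositive page. Closing this gap, and simultaneously passing from the arithmetic progression $q\equiv 1\pmod p$ (to which Theorem \ref{thm:tau} directly applies) to an arbitrary cabling parameter $q$, is where the substantive work lies; I would attempt both at once by carrying the positive factorization of the cabled monodromy back through the plumbing description of the cabled page, so that each companion copy inherits a positive factorization.
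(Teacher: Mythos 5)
The paper does not actually prove Theorem \ref{thm:ComplexCable} here: it explicitly postpones the proof to \cite{ComplexCable}, indicating only that the theorem follows from (a) a determination of how the contact structure supported by the fibration of $K_{p,q}$ relates to that supported by $K$ (tight if and only if $\xi_K$ is tight and $q>0$), combined with (b) the equivalence, for fibered knots, between having a fiber surface isotopic to a piece of a complex curve, being strongly quasipositive, and supporting the tight contact structure on $S^3$, established in \cite{SQPfiber,Rudolph2005}. Your overall translation into quasipositivity via Rudolph and Boileau--Orevkov, and your constructive plumbing argument for the ``if'' direction (which the paper notes holds more generally for knots bounding quasipositive Seifert surfaces), are in the same spirit and are plausibly fine.

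The ``only if'' direction is where your proposal has genuine gaps. First, Theorem \ref{thm:tau} applies only to cabling coefficients of the form $q=pn+1$, while the statement concerns arbitrary $q$; you acknowledge this, but your proposed fix --- carrying a positive factorization of the cabled monodromy back to the companion copies of the fiber --- is not carried out and is not routine (a positive factorization of a satellite's monodromy does not obviously restrict to one for the companion; this descent is essentially the hard content of the deferred paper, which instead runs the argument through the contact structures themselves rather than through factorizations). Second, the ``main obstacle'' you identify --- upgrading $\tau(K)=g(K)$ to quasipositivity of the fiber of $K$ --- is already closed by the cited literature: for \emph{fibered} knots, \cite{SQPfiber} proves that $\tau(K)=g(K)$ is equivalent to strong quasipositivity (equivalently, to the fiber surface being quasipositive), not merely to supporting a fillable contact structure. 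So your claim that this step is ``strictly weaker'' is incorrect in the fibered setting, and invoking that equivalence would let you dispense with the monodromy descent for the companion; but as written your argument neither uses it nor supplies a working substitute, and even with it you would still need the contact-geometric input of \cite{ComplexCable} to handle general $q$ rather than only $q\equiv 1\pmod p$.
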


Note the lack of restriction on $q$.  We also remark that the ``if" direction of the theorem holds for the  class of knots which bound {\em quasipositive Seifert surfaces}.  Quasipositive Seifert surfaces are those which can be obtained from parallel disks by attaching bands with a positive half twist (see Figure $1$ of \cite{ComplexCable}).   We postpone the proof of Theorem \ref{thm:ComplexCable} until \cite{ComplexCable}.   There, we determines the relationship between the contact structure associated to a fibered knot and those associated to its cables.   Theorem \ref{thm:ComplexCable}  is a corollary of this relationship and of a connection between  contact geometry and the knot theory of complex curves established in \cite{SQPfiber, Rudolph2005}.

\subsubsection{Cable knots and lens space (L-space) surgeries}
One area of low-dimensional topology where the \os \ invariants have had a significant impact is in the study of Dehn surgery \cite{BGH, Ghiggini2007, LensMe, NiFibered, Lens, AbsGrad, Figure8,  RasGoda, Ras2007, Wang2006}.  Many of these results draw on the close relationship between the knot Floer homology invariants of a knot, $K$, and the \os \  invariants of the closed three-manifolds obtained by Dehn surgery on $K$ (see \cite{Knots,IntegerSurgeries, RationalSurgeries}).  In the case that surgery on $K$ yields a manifold with particularly simple Floer homology, this relationship tightly constrains the knot Floer homology invariants.  The knot Floer homology, in turn,  determine various geometric and topological properties of the knot e.g. the genus, fiberedness.    

The three-manifolds with simplest Floer homology are the rational homology spheres, $Y$, for which the rank of the Floer homology is equal to the order of the first (singular) homology, i.e.  rk$\ \HFa(Y) = |H_1(Y;\Z)|$.  These manifolds are called {\em L-spaces}, and the name stems from the fact that lens spaces are L-spaces.    In the case that positive surgery on $K$ yields an L-space, we call $K$ an {\em L-space knot}.   \ons \ show that the knot Floer homology invariants of L-space knots are determined by the Alexander polynomial \cite{Lens}.  In particular, a corollary of their theorem is that the coefficients of the Alexander polynomial of an L-space knot are all $\pm 1$, and that the knot must be fibered \cite{Ghiggini2007,NiFibered} with four-genus equal to the Seifert genus \cite{FourBall}.  Combining Theorem \ref{thm:tau} with their result yields the following obstruction to a cable knot admitting an L-space surgery.

 \begin{cor}
	\label{cor:cor2}
		Suppose that positive surgery on $K_{p,pn+1}$ yields a lens space or, more generally, an L-space.  Then $n\ge 0$ and $\tau(K)=g(K)$.
\end{cor}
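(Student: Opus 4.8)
The engine of the proof is the \os \ analysis of L-space knots \cite{Lens}: if positive surgery on a knot $K'$ yields an L-space, then $K'$ is fibered and satisfies $\tau(K')=g(K')$ (its four-genus, hence its Seifert genus, is detected by $\tau$). I would apply this with $K'=K_{p,pn+1}$, so that the single scalar $\tau(K_{p,pn+1})$ is pinned to the Seifert genus $g(K_{p,pn+1})$. The plan is then to compute this genus explicitly, substitute into the two-sided bound of Theorem \ref{thm:tau}, and read off both conclusions.

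First I would compute $g(K_{p,pn+1})$. Since $K_{p,pn+1}$ is an L-space knot it is fibered, and a cable is fibered precisely when its companion is; hence $K$ is fibered and $g(K)=\deg\Delta_K$. For a fibered knot the Seifert genus equals the top degree of the symmetrized Alexander polynomial, so from the cabling formula \eqref{eq:satpoly} together with $\deg\Delta_{T_{p,q}}=\frac{(p-1)(|q|-1)}{2}$ I obtain
$$ g(K_{p,pn+1})=\frac{(p-1)(|pn+1|-1)}{2}+p\,g(K), $$
the product being monic so that no top-degree cancellation occurs. Combined with the L-space input this gives the rigid identity $\tau(K_{p,pn+1})=\frac{(p-1)(|pn+1|-1)}{2}+p\,g(K)$.

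Now I would feed this into the upper bound of Theorem \ref{thm:tau}. When $n\ge 0$ one has $|pn+1|=pn+1$, the genus contribution $\frac{(p-1)pn}{2}$ cancels the cabling term in the bound, and what remains is $p\,g(K)\le p\,\tau(K)+p-1$, i.e. $p\,(g(K)-\tau(K))\le p-1<p$; since $g(K)-\tau(K)$ is a non-negative integer this forces $\tau(K)=g(K)$. It remains to exclude $n\le -1$. Here $|pn+1|=-(pn+1)$, and the same substitution rearranges to $p\,(g(K)-\tau(K))\le (p-1)(pn+2)$; as the right side is $\le 0$ while the left is $\ge 0$, both must vanish, forcing $\tau(K)=g(K)$ together with $pn=-2$, i.e. $(p,n)=(2,-1)$.

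The main obstacle is precisely this surviving edge case $(p,n)=(2,-1)$, which the crude genus--bound comparison cannot kill. To eliminate it I would invoke the sharp form of Theorem \ref{thm:tau}: in the branch $\tau(K)=g(K)$ it asserts the exact value $\tau(K_{p,pn+1})=p\,\tau(K)+\frac{(pn)(p-1)}{2}$, which for $(p,n)=(2,-1)$ equals $2g(K)-1$. But fiberedness gave $g(K_{2,-1})=2g(K)$, so the L-space identity would demand $\tau(K_{2,-1})=2g(K)$, contradicting $2g(K)-1\ne 2g(K)$. Hence $n\le -1$ is impossible, so $n\ge 0$, and the $n\ge 0$ analysis already yields $\tau(K)=g(K)$. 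I expect the genus computation for a negative cabling parameter (justifying the $|pn+1|$ and the absence of leading-coefficient cancellation, both of which lean on fiberedness and monicity) and the disposal of this single edge case to be the only delicate points; everything else is a direct substitution into Theorem \ref{thm:tau}.
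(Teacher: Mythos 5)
Your proposal is correct and follows essentially the same route as the paper: both arguments feed the fact from \cite{Lens}/\cite{FourBall} that an L-space knot satisfies $\tau=g$ (applied to $K_{p,pn+1}$) into Theorem \ref{thm:tau} together with the cable genus formula. The paper merely packages this as membership in the set $\Pos=\{K \mid \tau(K)=g(K)\}$ via Corollary \ref{cor:pos}, using the exact value of $\tau(K_{p,pn+1})$ rather than only the upper bound, which is why it never meets the $(p,n)=(2,-1)$ edge case you dispose of separately; both versions implicitly assume $K$ is non-trivial, as Theorem \ref{thm:tau} requires.
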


  As counterpoint to this obstruction, we also have the following existence theorem:

 \begin{theorem}\label{thm:lspace}
Let $K\subset S^3$ be a non-trivial knot which admits  a positive $L$-space space surgery.  Then $K_{p,q}$ admits positive L-space surgeries whenever  $q\ge p(2g(K)-1)$, 
\end{theorem}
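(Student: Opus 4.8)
The plan is to reduce the construction of an L-space surgery on the cable $K_{p,q}$ to a surgery on $K$ itself, combining a classical reducible-surgery formula of Gordon with the interval structure of the set of L-space filling slopes of $K$.

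First I would isolate the Heegaard Floer input coming from the hypothesis on $K$. Since $K$ admits a positive L-space surgery, the work of \ons{} on L-space knots shows that a single such surgery forces $\HFKa(K)$ to have the rigid ``staircase'' form, and that $S^3_r(K)$ is then an L-space not merely for one slope but for \emph{every} rational $r \ge 2g(K)-1$. I would record this interval property — that the set of positive L-space surgery slopes of $K$ is exactly $[2g(K)-1,\infty)\cap\Q$ — as the essential ingredient, as it is the only step where genuine Floer-theoretic machinery enters.

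Next I would invoke the cabling surgery formula. Surgery on the $(p,q)$-cable along the cabling slope $pq$ (the slope of the cabling annulus) produces a reducible manifold,
$$ S^3_{pq}(K_{p,q}) \cong L(p,q)\ \#\ S^3_{q/p}(K), $$
where the lens space summand arises from the cable space and the second summand from filling the exterior of $K$ along slope $q/p$. The hypothesis $q \ge p(2g(K)-1)$ is precisely the condition $q/p \ge 2g(K)-1$, so the interval property gives that $S^3_{q/p}(K)$ is an L-space. Since $L(p,q)$ is a lens space and hence an L-space, and since $\HFa$ of a connected sum is the (grading-shifted) tensor product of the summands' Floer homologies, the rank condition is multiplicative and the connected sum of two L-spaces is again an L-space. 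Therefore $S^3_{pq}(K_{p,q})$ is an L-space.

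Finally I would verify positivity of the surgery slope: as $K$ is non-trivial we have $g(K)\ge 1$, whence $q \ge p(2g(K)-1) \ge p > 0$ and $pq>0$. Thus $pq$ is a positive (indeed integral) L-space surgery slope for $K_{p,q}$, which is the assertion. The main obstacle is the first step — propagating a single positive L-space surgery to the entire ray $[2g(K)-1,\infty)$ of slopes — while the remaining steps are topological (Gordon's reducible-surgery formula) and formal (the connected-sum behavior of $\HFa$), and require only bookkeeping.
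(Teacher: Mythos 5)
Your proposal is correct and follows essentially the same route as the paper: the decomposition $S^3_{pq}(K_{p,q})\cong S^3_{q/p}(K)\#L(p,q)$, the fact that $q/p\ge 2g(K)-1$ forces $S^3_{q/p}(K)$ to be an L-space (which the paper extracts from the rank formula of Proposition $9.5$ of \cite{RationalSurgeries} rather than from the staircase description, but the content is the same), and the multiplicativity of $\mathrm{rk}\,\HFa$ under connected sum. The only cosmetic difference is that you cite Gordon's reducible-surgery formula where the paper gives a self-contained cut-and-paste proof.
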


Note  the above result does not require $q$ of the form $q=pn+1$.
Indeed, it is proved somewhat differently from Theorem \ref{thm:filt}, using a standard cut-and-paste topological argument together with known properties of the \os \ invariants.

Theorem \ref{thm:lspace} is interesting in light of a paucity of examples.  The theorem is the first general  construction of  L-space knots outside of the double primitive knots \cite{Berge}.   Indeed, any sufficiently positive iterated cable of a knot which has an actual lens space surgery will itself have L-space surgeries. 

 As noted above, L-space knots have the property that the $\Z\oplus\Z$ filtered chain homotopy type of the knot's filtration of $\CF^\infty(S^3)$ is determined by the Alexander polynomial \cite{Lens}.  Since the Alexander polynomial of a cable knot is determined by Formula \eqref{eq:satpoly}, Theorem \ref{thm:lspace} provides an efficient method for calculating the Floer homology of a large class of cable knots.  For instance, $+5$ surgery on the trefoil is the lens space, $L(5,4)$, and thus the $(p,q)$ cable of the trefoil admits L-space surgeries whenever $q\ge p+1$.  In particular, the Floer homology of the $(2,3)$ cable of the trefoil is determined by its Alexander polynomial which, from Equation \eqref{eq:satpoly} is
$$(t-1+t^{-1})(t^2-1+t^{-2})= t^{3}-t^{2}+1-t^{-2}+t^{-4}.$$
\noindent On the other hand, this is the same Alexander polynomial as that of the $(3,4)$ torus knot.  As this knot also admits lens space surgeries, we find that the two distinct knots have identical Floer invariants.  This example was obtained by a rather lengthy calculation in \cite{MyThesis}.  

We find this example noteworthy as it appears difficult to produce families of L-space knots with the same Floer invariants.  On the other hand, we will show in an upcoming paper that infinite families of knots with identical Floer invariants are rather abundant.   These families, however, do not admit L-space surgeries.  It would be interesting to probe Theorem \ref{thm:lspace} for further examples of L-space knots with identical Floer homology.

\bigskip
\noindent{\bf{Remarks:}}  Versions of \ Theorems \ref{thm:filt} and \ref{thm:tau}  appear in the author's dissertation, \cite{MyThesis}.   

\bigskip
\noindent{\bf{Acknowledgment:}} I wish to thank Peter Ozsv{\'a}th for his advice and encouragement throughout my time as a graduate student, in which the heart of this work was done.  I also thank Chuck Livingston for his interest, and  Cornelia Van Cott for informing me of Kawauchi's results and suggesting Corollary \ref{cor:concordance}.

\section{Proof of Theorems}
\label{sec:proof}
In this section we prove the theorems stated in the introduction.  Several of the proofs rely heavily on the results of \cite{Cabling}. 

\subsection{Notational Background}
Before beginning, we recall a few facts about knot Floer homology.  Our purpose here is to establish notation and is not intended as an introduction to \os \ theory or knot Floer homology.  First, recall that to a knot $K\subset Y$ we can associate a doubly-pointed Heegaard diagram $$(\Sigma,\{\alpha_1,\ldots,\alpha_g\},\{\beta_1,\ldots,\beta_{g} \},w,z).$$
\noindent (see Definition $2.4$ of \cite{Knots}). The \os \ chain complex $\CFa(Y)$ is generated (as a $\Z/2\Z$ vector space)  by $g$-tuples $\x=x_1\times ... \times x_g$ of intersection points, where $x_i\in \alpha_i\cap \beta_{\sigma(i)}$ (here, $\sigma$ is a permutation in the symmetric group on $g$ letters).  The chain complex is equipped with a differential $\partial$ which counts points in moduli spaces of pseudo-holomorphic disks \cite{HolDisk}.  

Let $\mathcal{G}$ be the set of generators.  Using the basepoint, $w$, \ons \ define a map:
$$\spinc_w: \mathcal{G}\rightarrow \SpinC(Y),$$
\noindent where $\SpinC(Y)$ is the set of $\SpinC$ structures on $Y$.  The chain complex splits as direct sum $$\CFa(Y)\cong \underset{\spinc\in \SpinC(Y)}\bigoplus \CFa(Y,\spinc),$$
\noindent with the summand $\CFa(Y,\spinc)$ generated by those $\x\in\mathcal{G}$ with $\spinc_w(\x)=\spinc$.  Using both basepoints, \ons \ define a map:
$$\underline{\spinc}:=\spinc_{w,z}: \mathcal{G}\rightarrow \underline{\SpinC}(Y,K),$$
\noindent where $ \underline{\SpinC}(Y,K)$ is the set of relative $\SpinC$ structures on $Y-K$.  Picking a Seifert surface, $F$, for $K$, we obtain a map:
$$A : \mathcal{G}\rightarrow \Z,$$
\noindent by associating the quantity $A(\x)=\OneHalf\langle c_1(\spincrel(\x)),[F,\partial F]\rangle $ to a generator.  Here $c_1(\spincrel(\x))\in H^2(Y,K;\Z)$ is the relative first Chern class of the relative $\SpinC$ structure.  We will refer to $A$ as the {\em Alexander grading}.  $A$ defines a filtration on $\CFa(Y,\spinc)$, in the sense that $A(\partial (\x))\le A(\x)$ for each $\x\in \mathcal{G}$. Note that $A$ depends on the Seifert surface, but only through its homology class.

Thus, once we pick a homology class of Seifert surface, it makes sense to define $\FiltY(K,i)$ to be the subcomplex of $\CFa(Y,\spinc)$ generated by $\x\in \mathcal{G}$ satisfying $A(\x)\le i$.  We then have the finite length filtration mentioned in the introduction:
$$ 0=\FiltY(K,-i) \subseteq \FiltY(K,-i+1)\subseteq \ldots \subseteq
\FiltY(K,n)=\CFa(Y,\spinc).$$
\noindent We will habitually omit the Seifert surface from the notation and assume throughout that we have chosen a fixed Seifert surface, drawing attention to its role only when there may be some ambiguity.

Finally, we recall the following definition from \cite{tbbounds}.  To state it, let $\alpha\in \HFa(Y,\spinc)$ be a non-vanishing Floer homology class, and let $\iota_m: \FiltY(K,m)\rightarrow \CFa(Y,\spinc)$ be the inclusion map.
\begin{defn} 
	\label{defn:tau_alpha}
$$	\tau_{\alpha}(Y,K)=\mathrm{min}\{m\in\Z|\ \alpha \subset \mathrm{Im} \ (\iota_m)_*\}, $$\end{defn}
\noindent where $(\iota_m)_*$ is the map induced on homology.

\subsection{Proof of Theorem \ref{thm:filt}}
With the above notation, we will prove   the following more general version of  Theorem \ref{thm:filt}:
\begin{theorem}
\label{thm:filtY}
Let $K\subset Y$ be a knot and let  $\spinc\in\SpinC(Y)$.  Pick any $M\in \Z$. Then there exists a constant
$N(M)>0$ so that $\forall \ n>N(M)$, the following holds for each $j> M$:
$$      H_*(\FiltY(K_{p,pn+1},pj+\frac{(pn)(p-1)}{2}-1))\cong H_*(\FiltY(K,j-1)). $$
\noindent Furthermore, 
$$\begin{array}{ll}
	H_*(\FiltY(K_{p,pn+1},pj+\frac{(pn)(p-1)}{2}-i))\cong &  \\ 
H_*(\FiltY(K_{p,pn+1},pj+\frac{(pn)(p-1)}{2}-i-1)) & \forall \ i=2,\ldots,p-1. \\ 
\end{array}$$
In particular, 
$$\tau_\alpha(Y,K_{p,pn+1})=\left\{\begin{array}{ll}
        p \tau_\alpha(Y,K)+\frac{(pn)(p-1)}{2}+p-1 & {\text{or}} \\
      p \tau_\alpha(Y,K)+\frac{(pn)(p-1)}{2}. & \\ 
\end{array}
\right. $$
\end{theorem}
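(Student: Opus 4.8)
The plan is to reduce the value of $\tau_\alpha$ to the two filtered--homology isomorphisms, so I would first establish those and then read off the $\tau_\alpha$ formula as a formal consequence. Throughout set $c=\frac{(pn)(p-1)}{2}$ and $t=\tau_\alpha(Y,K)$. The decisive point to build into the proofs of the isomorphisms is \emph{naturality}: each isomorphism must be induced by a genuine filtered chain homotopy equivalence, so that it intertwines the inclusions $\iota_m\colon \FiltY(\,\cdot\,,m)\to\CFa(Y,\spinc)$ and hence commutes with the maps $(\iota_m)_*$ to $\HFa(Y,\spinc)$ that define $\tau_\alpha$ in Definition~\ref{defn:tau_alpha}. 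Since $K$ and $K_{p,pn+1}$ live in the \emph{same} three-manifold, the ambient complex $\CFa(Y,\spinc)$ and the class $\alpha$ are common to both filtrations, and this naturality is exactly what licenses the comparison of the two $\tau_\alpha$ values.

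I would build everything on the doubly-pointed Heegaard diagram for $(Y,K_{p,pn+1})$ and the generator analysis of \cite{Cabling}. Of the two isomorphisms, the ``furthermore'' family is the cheaper: the pair $\bigl(\FiltY(K_{p,pn+1},pj+c-i),\,\FiltY(K_{p,pn+1},pj+c-i-1)\bigr)$ has associated graded equal to the knot Floer chain group in Alexander grading $pj+c-i$, and the stabilization computation of \cite{Cabling} shows that these intermediate groups $\HFKa(K_{p,pn+1},pj+c-i)$ vanish for $n>N(M)$, $j>M$, and $i=2,\dots,p-1$. The long exact sequence of the pair then forces the inclusion to be a quasi-isomorphism, giving the ``furthermore'' isomorphisms together with their naturality for free. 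The first isomorphism is the substantial one: it concerns the homology of an honest subcomplex $\FiltY(K_{p,pn+1},pj+c-1)$ rather than a single associated-graded group, so it cannot be extracted from \cite{Cabling}'s associated-graded computation and requires understanding the filtered chain homotopy type.

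The step I expect to be the main obstacle is therefore proving the first isomorphism, i.e.\ showing that for $n>N(M)$ the truncation $\FiltY(K_{p,pn+1},pj+c-1)$ is filtered chain homotopy equivalent to $\FiltY(K,j-1)$. For large $n$ the generators separate, along the Alexander grading, into blocks that drift apart linearly in $n$, each block indexed by an Alexander grading of $K$ together with a winding index from the pattern $T_{p,pn+1}$; the level $pj+c-1$ then cuts out a subcomplex whose generators are in grading-preserving bijection with those of $\FiltY(K,j-1)$. The work is to promote this bijection of generators to a chain homotopy equivalence by controlling the differential: every pseudo-holomorphic disk contributing to $\partial$ inside this subcomplex must be shown either to be confined to a single block, where a positivity-of-domains count identifies it with a disk counted by the differential of $\FiltY(K,j-1)$, or to be excluded. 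Disks joining distinct blocks are killed by the standard large-$n$ estimate of \cite{Cabling}: traversing the winding region between blocks forces the domain's multiplicities, and hence its area, to grow with $n$, or else forces $n_w>0$, neither of which is possible for a fixed Maslov index in the window $j>M$. The delicate issue is uniformity --- a single $N(M)$ must handle all $j>M$ simultaneously --- and this is exactly where the linear-in-$n$ block separation is used. Carrying the inclusion into $\CFa(Y,\spinc)$ through this equivalence supplies the naturality.

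Granting the isomorphisms and their naturality, the $\tau_\alpha$ formula is formal. Putting $j=t+1$ into the first isomorphism gives $H_*(\FiltY(K_{p,pn+1},pt+c+p-1))\cong H_*(\FiltY(K,t))$ compatibly with the maps to $\HFa(Y,\spinc)$; since $\alpha\subset\Image(\iota_t)_*$ for $K$, this yields $\alpha\subset\Image(\iota_{pt+c+p-1})_*$ for the cable. Putting $j=t$ gives $H_*(\FiltY(K_{p,pn+1},pt+c-1))\cong H_*(\FiltY(K,t-1))$, and as $\alpha\not\subset\Image(\iota_{t-1})_*$ for $K$ we get $\alpha\not\subset\Image(\iota_{pt+c-1})_*$ for the cable; hence
\[
pt+c\;\le\;\tau_\alpha(Y,K_{p,pn+1})\;\le\;pt+c+p-1 .
\]
The ``furthermore'' isomorphisms with $j=t+1$ and $i=2,\dots,p-1$ chain together to show that $\Image(\iota_m)_*$ is constant over $pt+c\le m\le pt+c+p-2$. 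Thus either $\alpha$ already lies in this constant image --- whence $\tau_\alpha(Y,K_{p,pn+1})=pt+c$, as it is absent at level $pt+c-1$ --- or $\alpha$ lies outside it, whence $\alpha$ first appears at level $pt+c+p-1$ and $\tau_\alpha(Y,K_{p,pn+1})=pt+c+p-1$. This is precisely the claimed dichotomy
\[
\tau_\alpha(Y,K_{p,pn+1})=p\,\tau_\alpha(Y,K)+\frac{(pn)(p-1)}{2}
\quad\text{or}\quad
p\,\tau_\alpha(Y,K)+\frac{(pn)(p-1)}{2}+p-1 .
\]
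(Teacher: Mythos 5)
Your reduction of the $\tau_\alpha$ dichotomy to the two filtered-homology isomorphisms, and your handling of the ``furthermore'' family via vanishing of the intermediate associated-graded groups and the long exact sequence of the pair, are both sound and match the logic of the paper. The genuine gap is in the first isomorphism, which you correctly flag as ``the substantial one'' but then only sketch. Your plan --- put the generators of the cable's complex in ``grading-preserving bijection'' with those of $\FiltY(K,j-1)$ and then promote this to a filtered chain homotopy equivalence by showing every holomorphic disk is either confined to a block (and identified with a disk for $K$ by ``positivity of domains'') or excluded by a large-$n$ area estimate --- is not carried out, and as stated it would not go through: the correspondence between exterior generators of the cable diagram and generators of a diagram for $K$ is $(2n(p-1)+1)$-to-one, not a bijection, and there is no a priori reason why the differential of a subcomplex computed from a Heegaard diagram for $K_{p,pn+1}$ should agree, disk by disk, with the differential computed from a different diagram for $K$. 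Establishing such an identification of moduli spaces across two diagrams is precisely the hard analytic work your sketch defers.

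The paper avoids all of this with one structural observation that is absent from your proposal: the winding-region diagram $H(p,n)$, equipped with a third basepoint, presents \emph{both} $K$ (via the pair $(w,z)$) and $K_{p,pn+1}$ (via $(w,z')$) on the \emph{same} underlying Heegaard diagram. Since the differential of $\CFa(Y,\spinc)$ and its restriction to filtered subcomplexes depend only on the condition $n_w(\phi)=0$, the two Alexander filtrations $A$ and $A'$ are two filtrations of the literally identical chain complex, and the content of the theorem reduces to the combinatorial computation (Lemmas $3.3$, $3.4$, $3.6$ of the predecessor) that the sublevel sets $\{A'\le pj+c-1\}$ and $\{A\le j-1\}$ coincide as sets of generators once $n$ is large enough that only exterior points occupy the relevant gradings. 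The claimed isomorphisms are then equalities of subcomplexes of $\CFa(Y,\spinc)$, naturality is automatic, and no pseudo-holomorphic curve analysis is needed. Without this same-diagram device (or a completed version of your two-diagram comparison), the central step of your argument remains unproven.
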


\begin{remark}  In the above, the filtration for $K_{p,pn+1}$ is defined using a Seifert surface $F_{p,pn+1}$ satisfying $[F_{p,pn+1}]=p[F]$, where $F$ is the Seifert surface used to define the filtration for $K$.
\end{remark}
\begin{proof} The key tool in proving the theorem is

\begin{lemma}
\label{lemma:hd}\lbra Lemma 2.2 of \cite{Cabling}\rbra
Let
$$H=(\Sigma,\{\alpha_1,\ldots,\alpha_g\},\{\beta_1,\ldots,\beta_{g-1},\mu\},z,w),$$
be a Heegaard diagram for a knot K, where $\mu$ is a meridian for $K$.   Then 
$$H(p,n)= (\Sigma,\{\alpha_1,\ldots,\alpha_g\},\{\beta_1,\ldots,\beta_{g-1},\tilde{\beta}\},z',w), $$

\noindent is a Heegaard diagram for
$K_{p,pn+1}$.  $H$ and $H(p,n)$ differ only in the final curve  $\tilde{\beta}$.  Here, $\tilde{\beta}$ is obtained by winding
$\mu$  around an $n$-framed longitude for the knot $(p-1)$ times. $w$
is to remain fixed under this operation. $z$ is replaced by a
basepoint $z'$ so that the arc
connecting $z'$ and $w$ has intersection number $p$
with $\tilde{\beta}$.  \lbra See
Figure ~\ref{fig:wind}.\rbra
\end{lemma}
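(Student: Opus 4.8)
The plan is to verify directly the two conditions packaged in the assertion that $H(p,n)$ is a doubly-pointed Heegaard diagram for $K_{p,pn+1}$: first, that the triple $(\Sigma,\{\alpha_1,\ldots,\alpha_g\},\{\beta_1,\ldots,\beta_{g-1},\widetilde\beta\})$ presents the same three-manifold $Y$; and second, that the pair of basepoints $(w,z')$ singles out the cable. I would begin by recalling the dictionary between a doubly-pointed diagram and its knot: join $w$ to $z$ by an arc $a$ in the complement of the $\alpha$-curves and $z$ to $w$ by an arc $b$ in the complement of the $\beta$-curves, push $a$ into the handlebody $U_\alpha$ and $b$ into $U_\beta$, and set $K=a\cup b$. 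Because $H$ and $H(p,n)$ agree away from the last curve and the modification is supported in the winding region of Figure~\ref{fig:wind}, the entire comparison is local: outside this region the two diagrams, and the two knots they determine, coincide.

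For the three-manifold statement I would use that the winding is carried out by an isotopy of the attaching curve $\mu$ to $\widetilde\beta$ inside $\Sigma$---a spiralling move about a parallel copy of the $n$-framed longitude---so that the $\beta$-handlebody $U_\beta$, and hence $Y$, is unchanged; the knot changes only because the basepoint is relocated from $z$ to $z'$, which alters the isotopy class of the connecting arc $b$ in the complement of $\widetilde\beta$. For the knot statement, the hypothesis that $\mu$ is a meridian lets me identify a neighborhood of $\mu$ in $\Sigma$ with a portion of $\partial\nu(K)$ carrying the meridian $\mu$ and the $n$-framed longitude $\lambda_n=\lambda+n\mu$, where $\lambda$ denotes the Seifert framing. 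Since the construction is local, $K'=a'\cup b'$ is a satellite of $K$, and I would identify its pattern by computing the homology class of the corresponding curve on $\partial\nu(K)$: the requirement that the $z'$--$w$ arc meet $\widetilde\beta$ in exactly $p$ points forces $p$ longitudinal strands, while winding $(p-1)$ times about $\lambda_n$ fixes the framing so that the resulting curve has class $p\lambda_n+\mu=p\lambda+(pn+1)\mu$. As $\gcd(p,pn+1)=1$, this curve is connected and realizes the torus knot $T_{p,pn+1}$ on the boundary torus, so $K'=K_{p,pn+1}$.

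The step I expect to be the main obstacle is the framing bookkeeping inside the winding region: one must confirm that winding precisely $(p-1)$ times and choosing $z'$ with intersection number $p$ against $\widetilde\beta$ yields exactly the meridional coefficient $pn+1$, and not a value off by a multiple of $p$ arising from a misidentification of the longitude, and that the resulting pattern is embedded. I would pin this down in explicit coordinates on the winding annulus, matching the $p$ longitudinal strands to the $p$ intersection points and the $(p-1)$ turns to the $n$-framed twisting, and checking the signed count of times the isotopy from $\mu$ to $\widetilde\beta$ passes the basepoint. The figure accompanying the lemma makes this local count unambiguous, after which the global conclusion follows from the locality established at the outset.
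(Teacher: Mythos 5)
The paper does not prove this lemma at all---it is imported verbatim as Lemma 2.2 of \cite{Cabling}---but your proposal reconstructs essentially the argument given there: one checks that $\tilde{\beta}$ is isotopic to $\mu$ in $\Sigma$ (so the underlying three-manifold is unchanged), and then uses the arc-pushing description of the knot associated to a doubly-pointed diagram to see that the new knot lies on $\partial\nu(K)$ in the class $p\lambda_n+\mu=p\lambda+(pn+1)\mu$, the local bookkeeping being read off from the winding-region picture exactly as you describe. Your identification of the framing count as the only delicate point, and your plan to resolve it in explicit coordinates on the winding annulus, is consistent with how the cited proof handles it via Figure~\ref{fig:wind}.
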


\begin{figure}
\psfrag{p}{$p$}
\psfrag{x0}{{\small $x_0$}}
\psfrag{x1}{{\small$x_1$}}
\psfrag{x2}{{\small$x_2$}}
\psfrag{x3}{{\small$x_3$}}
\psfrag{x4}{{\small$x_4$}}
\psfrag{x5}{{\small$x_5$}}
\psfrag{x6}{{\small$x_6$}}
\psfrag{x7}{{\small$x_7$}}
\psfrag{x8}{{\small$x_8$}}
\psfrag{z}{$z$}
\psfrag{w}{$w$}
\psfrag{z'}{$z'$}
\psfrag{ai}{$\alpha_i$}
\psfrag{aj}{$\alpha_j$}
\psfrag{ag}{$\alpha_g$}
\psfrag{H(p,n)}{$H(p,n)$}
\psfrag{H}{$H$}
\psfrag{mu}{$\mu$}
\psfrag{lambda}{$\lambda$}
\psfrag{betatilde}{$\tilde{\beta}$}

\begin{center}
\includegraphics{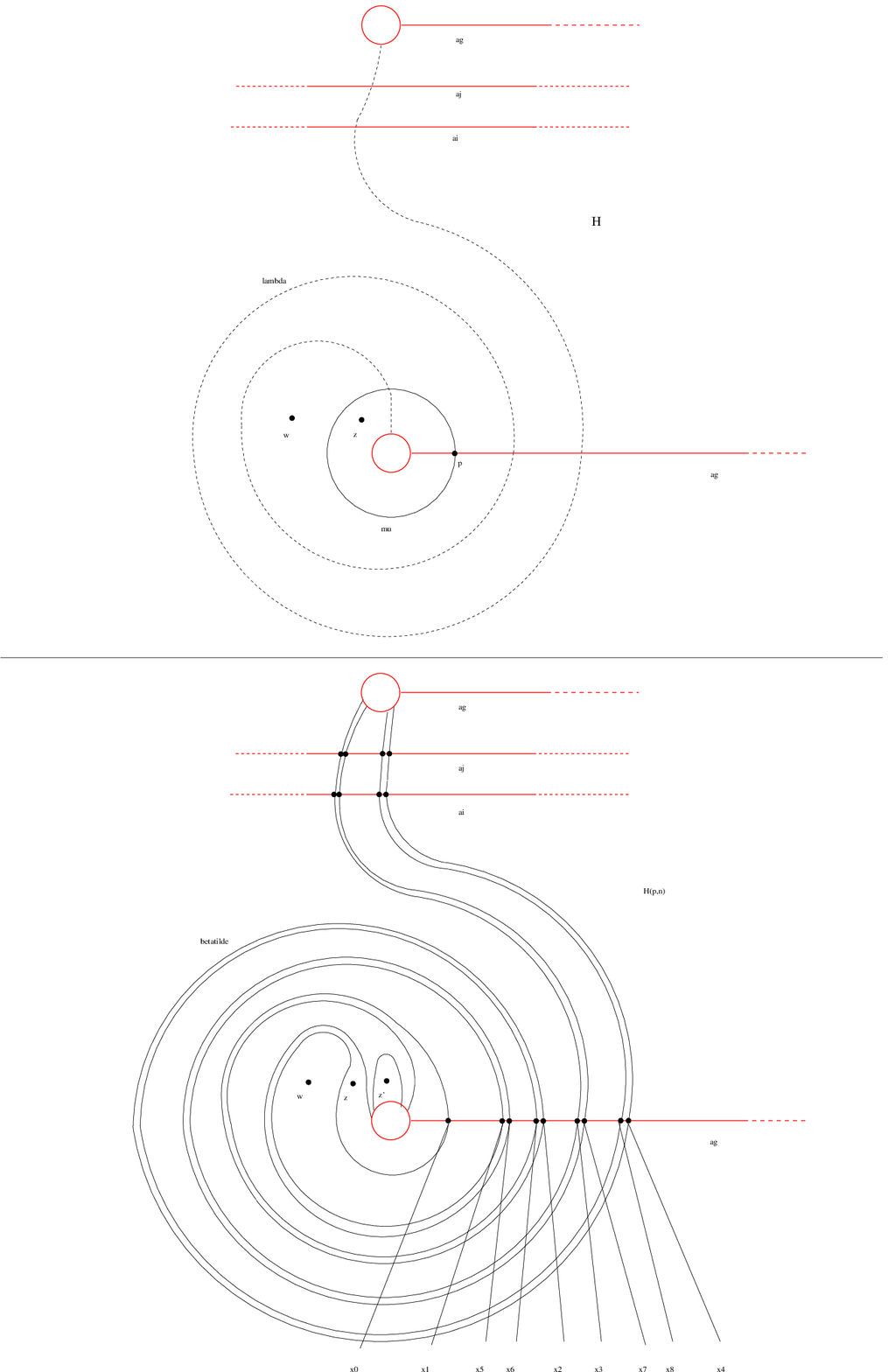}
\caption{\label{fig:wind}   Shown are parts of the diagrams $H$ and $H(p,n)$  (here, $p=3$ and $n=2$). On $H(p,n)$, we have slid the meridian, $\mu$, $2$ times along a $2$-framed longitude $\lambda$. This figure establishes the ordering on exterior intersection points in terms of the ordering on $x_i\in \alpha_g\cap \tilde{\beta}$.   }
\end{center}
\end{figure}

  Lemma 3.1 of \cite{Cabling} shows that with the addition of a third basepoint (which, by an abuse of notation, we also denote by $z$) $H(p,n)$ specifies $K$ by using the pair $(w,z)$ while it specifies $K_{p,pn+1}$ using the pair $(w,z^\prime)$. See Figure \ref{fig:wind}.  Increasing the parameter $n$ for the cable has the effect of adding many generators to the knot Floer chain complexes derived from $H(p,n)$.  These generators, $\x'$, were called {\em exterior intersection points} in the predecessor, and are characterized by the property of having one component of $\x'$ of the form $x_i\in \alpha_g\cap \tilde{\beta}$, with $x_i$ lying in a small neighborhood of the meridian, $\mu$.  The exterior points are in $(2n(p-1)+1)$-to-one correspondence with the generators of the  chain complex for $K$ which came from $H$, i.e. there is $(2n(p-1)+1)$-to-one map:
$$\pi:\{\mathrm{exterior \ intersection \ points \ of \ H(p,n)} \}\longrightarrow \{\mathrm{intersection \ points \ of\ H}\}.$$

More precisely, any generator for the Heegaard diagram $H$ is of the from $\{p,\y\}$, where $p\in \alpha_g\cap \mu$ and $\y$ is a $(g-1)$-tuple of intersection points.  The fibers of $\pi$ are given by 
$$\pi^{-1}(\{p,\y\}) = \{ \ \{x_i,\y\} \ |  \ x_i\in \alpha_g\cap \tilde{\beta}, \ i=0,..., 2n(p-1)\}.$$
Figure \ref{fig:wind} establishes an ordering for the fiber $\pi^{-1}(\{p,\y\})$, in terms of an ordering on the $x_i$.  In terms of this ordering, we call intersection points of the form $\{x_0,\y\}$ {\em outermost intersection points}.  Let us establish some notation:
 $$ C(i):= \{ (g-1)\sd \text{tuples},\ \y \ | \ \{p,\y\} \text{\ has\ Alexander\ grading\ } i \text{\ for\  }H  \} $$
 $$ A:= \text{\ Alexander\ grading\ for\ generators\ from\   } H(p,n),\text{\ with\ respect\ to\ } (w,z)$$
$$ A':= \text{\ Alexander\ grading\ for\ generators\ from\        } H(p,n),\text{\ with\ respect\ to\  } (w,z')$$
 
We will determine the Alexander gradings, $A$ and $A'$,  of exterior intersection through a sequence of lemmas.  The first is an adaptation of Lemma $3.6$ of \cite{OSThurston} to the present notation.   It determines $A$ and $A'$ for the outermost intersection points.  

\begin{lemma}\label{lemma:pindown}
Let $\x=\{x_0,\y\}$ be an outermost intersection point with $\y\in C(i)$.  Then
\begin{enumerate}
\item The $\SpinC$ structure $\spinc\in \SpinC(Y)$ associated to $\x$ is independent of  $z$ or $z'$, and agrees with the $\SpinC$ structure associated to the corresponding generator for $H$, $\{p,\y\}$.
\item $A(\{x_0,\y\})=i$.  That is, the Alexander grading of $\{x_0,\y\}$  with respect to $(w,z)$ agrees with that of $\{p,\y\}$.
\item  $A'(\{x_0,\y\})= pA(\{x_0,\y\})+\frac{(pn)(p-1)}{2} $
\end{enumerate}
\end{lemma}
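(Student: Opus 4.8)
The plan is to prove the three parts in order, reserving the real work for part (3). Part (1) is almost immediate: the map $\spinc_w$ is \emph{defined} using only the basepoint $w$ (see \cite{Knots,HolDisk}), so its value is manifestly independent of the choice between $z$ and $z'$. To see that it agrees with the $\SpinC$ structure of the corresponding generator $\{p,\y\}$ of $H$, I would connect $\{x_0,\y\}$ to $\{p,\y\}$ by a domain supported in the neighborhood of $\mu$ where the winding takes place; because $x_0$ is the \emph{outermost} intersection point, this domain has vanishing multiplicity at $w$, and the standard formula for $\spinc_w(\x)-\spinc_w(\y)$ in terms of $\PD$ of the associated $1$-cycle shows the two structures coincide. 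For part (2), I would use that $H(p,n)$ equipped with the pair $(w,z)$ is a Heegaard diagram for $K$ (Lemma $3.1$ of \cite{Cabling}), so that the relative grading formula $A(\x)-A(\x'')=n_z(\phi)-n_w(\phi)$ applies. Feeding in the same local domain, which avoids both $z$ and $w$, yields $A(\{x_0,\y\})=i$, the value being $i$ by the defining property of $C(i)$.

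The heart of the argument is part (3), which I would split into a relative step and a normalization step. \textbf{Relative step.} For two outermost generators $\{x_0,\y\}$ and $\{x_0,\y'\}$ with $\y\in C(i)$, $\y'\in C(i')$, choose $\phi\in\pi_2(\{x_0,\y\},\{x_0,\y'\})$ whose domain $D$ is supported away from the winding region; this is possible precisely because the two generators share the coordinate $x_0$. Then $A(\{x_0,\y\})-A(\{x_0,\y'\})=n_z(D)-n_w(D)=i-i'$ by part (2), while $A'(\{x_0,\y\})-A'(\{x_0,\y'\})=n_{z'}(D)-n_w(D)$. The point is that $n_z(D)-n_w(D)$ records the linking of the boundary cycle of $D$ with the arc joining $z$ to $w$, which meets $\tilde{\beta}$ in a single point; replacing $z$ by $z'$ replaces this arc by one meeting $\tilde{\beta}$ in $p$ points (equivalently, $[F_{p,pn+1}]=p[F]$), so $n_{z'}(D)-n_w(D)=p\,(n_z(D)-n_w(D))$. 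This establishes $A'(\{x_0,\y\})=p\,A(\{x_0,\y\})+c$ for a constant $c$ independent of the outermost generator chosen.

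\textbf{Normalization step.} To pin down $c$, I would compute $A$ and $A'$ explicitly for a single conveniently chosen outermost generator by reading multiplicities directly off the winding region of Figure~\ref{fig:wind}. Here the $(p-1)$-fold winding of $\mu$ about the $n$-framed longitude contributes the arithmetic sum $\frac{(pn)(p-1)}{2}$, forcing $c=\frac{(pn)(p-1)}{2}$. As a sanity check, one recognizes $\frac{(pn)(p-1)}{2}$ as the Seifert genus of the pattern torus knot $T_{p,pn+1}$, and the resulting shift is exactly what is needed so that the top nonzero Alexander grading of the cable is $p\,g(K)+\frac{(pn)(p-1)}{2}=g(K_{p,pn+1})$.

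I expect the main obstacle to be this explicit local computation in the winding region: both the claim that changing the basepoint from $z$ to $z'$ scales the relative grading by exactly $p$ and the evaluation of the additive constant require careful tracking of the intersection numbers of domains with $\tilde{\beta}$ across the $(p-1)$ windings, with all signs and orientation conventions controlled. This is precisely where one must adapt the relative $\SpinC$/reference-path computation of Lemma $3.6$ of \cite{OSThurston} to the present notation, and it is the step most prone to bookkeeping errors.
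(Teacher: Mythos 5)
Your outline is sound and, for part (3), takes a genuinely more self-contained route than the paper: the paper simply invokes Lemma $3.6$ of \cite{OSThurston} (specialized to $l=1$, $p_1=p$, $q_1=pn+1$, then evaluated via relative Chern classes), whereas you decompose the claim into a relative step plus a normalization. Your relative step --- that moving among outermost generators scales the grading difference by $p$ because $[F_{p,pn+1}]=p[F]$ --- is correct and is in fact exactly Lemma \ref{lemma:varytuple} (Lemma $3.4$ of \cite{Cabling}); what remains is to pin down the additive constant, and that is precisely the content the paper outsources to \cite{OSThurston}. Two caveats. First, in parts (1) and (2) you speak of a ``domain'' connecting $\{x_0,\y\}$ to $\{p,\y\}$, but these are generators of \emph{different} Heegaard diagrams ($\tilde{\beta}$ versus $\mu$), so no such Whitney disk exists in either diagram; the correct mechanism, and the one the paper uses, is an isotopy carrying $\tilde{\beta}$ back to $\mu$ in the complement of $w$ and $z$, under which $x_0$ is identified with $p$. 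Your ``local domain'' is morally the trace of this isotopy, but as written the relative grading formula $A(\x)-A(\x'')=n_z(\phi)-n_w(\phi)$ does not apply. Second, your normalization step is asserted rather than performed: you state that the $(p-1)$-fold winding contributes $\frac{(pn)(p-1)}{2}$ and offer the (correct) consistency check $g(K_{p,pn+1})=pg(K)+\frac{(pn)(p-1)}{2}$, but the explicit multiplicity count in the winding region is the entire analytic content of part (3), and you yourself flag it as the step most prone to error. A consistency check against the genus is not a proof that the constant is right; to close the argument you must either carry out that count or, as the paper does, cite the computation of \cite{OSThurston}.
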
 
\begin{proof}  The first two parts follow from the fact that $(1)$ the map  $\spinc_w$ does not depend on $z$ or $z'$ and $(2)$  there is an isotopy taking $\tilde{\beta}$ to $\mu$ in the complement of $z$, under which the generator 
$\{x_0,\y\}$  becomes identified with $\{p,\y\}$.  The third is the content of Lemma $3.6$ of \cite{OSThurston} which establishes the result in the context of multi-pointed Heegaard diagrams for links.  One can pass from their result to the present case by setting $l=1$,  $p_1=p$, and $q_1=pn+1$ and then considering Chern classes of relative  $\SpinC$ structures to determine the Alexander grading.
\end{proof}

Next, we have Lemmas $3.3$ and $3.4$ of \cite{Cabling}
\begin{lemma}
Let $H(p,n)$ be as above.  Then for odd integers $i < 2n$, we have 
\begin{gather*}
A(x_{i-1} ,\y)-A(x_{i},\y)=A'(x_{i-1},\y)-A'(x_{i},\y)=1\\
A(x_i,\y)-A(x_{i+1},\y)=0\\
A'(x_i,\y)-A'(x_{i+1},\y)=p-1.
\end{gather*}
\end{lemma}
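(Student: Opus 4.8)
The plan is to compute each Alexander-grading difference by finding an explicit domain connecting the two generators in question and reading off its local multiplicities at the basepoints. Recall that for any two generators $\x,\y$ of $H(p,n)$ and any domain $\phi\in\pi_2(\x,\y)$ one has
$$ A(\x)-A(\y)=n_z(\phi)-n_w(\phi),\qquad A'(\x)-A'(\y)=n_{z'}(\phi)-n_w(\phi), $$
since the Alexander grading is $\tfrac12\langle c_1(\relspinc(\cdot)),[F,\partial F]\rangle$ and the relative $\SpinC$ structures of $\x$ and $\y$ differ by $\PD$ of the class determined by $\phi$, whose pairing with the Seifert surface is exactly $n_z(\phi)-n_w(\phi)$ (respectively $n_{z'}(\phi)-n_w(\phi)$ for the cable). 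Because the generators $\{x_{i-1},\y\}$, $\{x_i,\y\}$, $\{x_{i+1},\y\}$ differ only in their $\alpha_g\cap\tilde\beta$ coordinate, the connecting domains can be taken to lie entirely in the winding region, and the whole problem reduces to a planar computation there.

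First I would set up the local model of the winding region as in Figure \ref{fig:wind}: $\alpha_g$ runs across the region and $\tilde\beta$, obtained by dragging $\mu$ along the $n$-framed longitude $\lambda$, crosses it in the ordered points $x_0,x_1,\dots,x_{2n(p-1)}$. The complement of $\alpha_g\cup\tilde\beta$ in this region is cut into elementary bigons, and these come in two types that alternate along the sequence $x_0,x_1,x_2,\dots$: an ``inner'' bigon joining an even-indexed point to the next odd-indexed point, and an ``outer'' bigon joining an odd-indexed point to the next even-indexed point, the latter wrapping once around $\lambda$. The restriction to odd $i<2n$ is exactly what keeps us in the range where both types are the standard embedded bigons, away from the endpoint effects of the winding.

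Next I would compute multiplicities. In all cases the basepoint $w$ stays fixed near $\mu$ and outside these bigons, so $n_w=0$ throughout. For the inner bigon $\phi\in\pi_2(\{x_{i-1},\y\},\{x_i,\y\})$ with $i$ odd, a direct inspection gives $n_z(\phi)=n_{z'}(\phi)=1$, yielding $A(x_{i-1},\y)-A(x_i,\y)=A'(x_{i-1},\y)-A'(x_i,\y)=1$. For the outer bigon $\psi\in\pi_2(\{x_i,\y\},\{x_{i+1},\y\})$ with $i$ odd, the bigon encircles $\lambda$ once; since $z$ is placed so that the arc from $z$ to $w$ meets $\tilde\beta$ once while $z'$ is placed so that the arc from $z'$ to $w$ meets $\tilde\beta$ in $p$ points, a single wrap covers $z$ with net multiplicity $0$ but covers $z'$ with net multiplicity $p-1$. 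This gives $A(x_i,\y)-A(x_{i+1},\y)=0$ and $A'(x_i,\y)-A'(x_{i+1},\y)=p-1$, as claimed. As a consistency check, telescoping these differences out from $x_0$ reproduces $A'(x_{2k},\y)=pA(x_{2k},\y)+\mathrm{const}$, compatible with Lemma \ref{lemma:pindown}(3).

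The main obstacle is the honest bookkeeping of the local multiplicities at $z$ and $z'$ for the outer, $\lambda$-encircling bigon: one must pin down precisely how the parallel strands produced by the winding sit relative to $z'$ and verify that a single wrap contributes $p-1$ at $z'$ but $0$ at $z$, with correct signs. This is a careful but elementary planar argument in the winding diagram, and it is where the orientation and placement conventions for $z$, $z'$, and $\lambda$ must be fixed once and for all.
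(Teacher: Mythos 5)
The paper itself offers no proof of this lemma: it is imported wholesale as Lemmas~3.3 and~3.4 of \cite{Cabling}, so the only meaningful comparison is with the argument in that reference. Your strategy---connect the relevant generators by domains supported in the winding region and read off the grading differences as $n_z(\phi)-n_w(\phi)$ and $n_{z'}(\phi)-n_w(\phi)$---is exactly the method used there, and your reduction to a planar computation is legitimate: the generators differ only in their $\alpha_g\cap\tilde{\beta}$ coordinate, and the answer is independent of the choice of $\phi\in\pi_2(\x,\y)$ because periodic domains have equal multiplicities at all basepoints for a null-homologous knot.

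The place where your write-up is thin is precisely the multiplicity bookkeeping, which is the entire content of the lemma and which you partly reverse-engineer from the desired conclusion. Two points need an honest check against Figure~\ref{fig:wind}. First, for $\phi\in\pi_2(\{x_{i-1},\y\},\{x_i,\y\})$ you assert $n_z(\phi)=n_{z'}(\phi)=1$ with $\phi$ a small ``inner'' bigon; but $z$ and $z'$ are separated by the $p-1$ longitudinal strands of $\tilde{\beta}$ (the arc from $w$ to $z$ meets $\tilde{\beta}$ once, the arc from $w$ to $z'$ meets it $p$ times), so a genuinely small bigon between adjacent intersection points cannot contain both basepoints; the correct connecting domain is a longer strip, and the cleanest way to compute its multiplicities is as winding numbers of the closed curve formed by the $\alpha_g$- and $\tilde{\beta}$-arcs from $x_{i-1}$ to $x_i$ about each basepoint. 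Second, your value $n_{z'}(\psi)=p-1$ for the wrap-around domain is ``forced by consistency'' (no domain with multiplicities in $\{0,1\}$ can produce a drop of $p-1$), but consistency with the statement being proved is not a proof: you should exhibit the $p-1$ longitudinal strands that the wrap-around strip crosses between $z'$ and the region containing $w$. Note also that your telescoping check only constrains the sum of the two per-period drops, and Lemma~\ref{lemma:pindown}(3) concerns only the outermost points $x_0$, so it does not by itself determine how the drop of $p$ in $A'$ per period is distributed between the two steps. None of this is a wrong turn---the skeleton is the right one---but the lemma lives or dies on that local count, and as written you have asserted it rather than established it.
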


\begin{lemma}
\label{lemma:varytuple}
Suppose $\y \in C(j), \z \in C(k)$.  Then,
\begin{gather*}A(x_i,\y)-A(x_i,\z)=j-k\\
A'(x_i,\y)-A'(x_i,\z)=p(j-k).\end{gather*}
\end{lemma}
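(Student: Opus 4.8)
The plan is to reduce the general statement to the outermost case $i=0$, which is already handled by Lemma~\ref{lemma:pindown}, and then to propagate it to all $i$ using the tuple-independence of the increment relations in the previous lemma. For the base case, note that for $\y\in C(j)$ part $(2)$ of Lemma~\ref{lemma:pindown} gives $A(\{x_0,\y\})=j$, so $A(\{x_0,\y\})-A(\{x_0,\z\})=j-k$, while part $(3)$ gives $A'(\{x_0,\y\})=pj+\frac{(pn)(p-1)}{2}$, whence $A'(\{x_0,\y\})-A'(\{x_0,\z\})=p(j-k)$. Thus both identities hold for $i=0$. Conceptually, the reason the $A$ case is the ``easy'' half and the $A'$ case carries the real content is that $A$ is computed relative to $(w,z)$, where $z$ sits in its original position as in $H$, whereas $A'$ uses the relocated basepoint $z'$ whose connecting arc to $w$ meets $\tilde\beta$ exactly $p$ times; this is precisely where the factor of $p$ enters, and it is already packaged into part $(3)$ of Lemma~\ref{lemma:pindown}.

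It then suffices to show that $A(\{x_i,\y\})-A(\{x_i,\z\})$ and $A'(\{x_i,\y\})-A'(\{x_i,\z\})$ are independent of $i$. I would do this by examining the telescoping difference
\[
\bigl[A(x_i,\y)-A(x_i,\z)\bigr]-\bigl[A(x_{i+1},\y)-A(x_{i+1},\z)\bigr]
=\bigl[A(x_i,\y)-A(x_{i+1},\y)\bigr]-\bigl[A(x_i,\z)-A(x_{i+1},\z)\bigr],
\]
and likewise for $A'$. The crucial observation is that, by the preceding lemma (Lemmas $3.3$ and $3.4$ of \cite{Cabling}), each single-step increment $A(x_i,\cdot)-A(x_{i+1},\cdot)$ and $A'(x_i,\cdot)-A'(x_{i+1},\cdot)$ depends only on $i$ and not on the remaining $(g-1)$-tuple: the recorded values ($0$ and $1$ for $A$, $p-1$ and $1$ for $A'$) make no reference to $\y$. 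Hence the two bracketed increments on the right-hand side coincide, the telescoping difference vanishes, and both quantities are constant in $i$. Evaluating at $i=0$ then yields $j-k$ and $p(j-k)$ respectively. Note that this argument is purely arithmetic in the grading values and never requires a connecting domain between $\{x_i,\y\}$ and $\{x_i,\z\}$ to exist, which is convenient since these generators may lie in distinct $\SpinC$ structures.

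The one point demanding care is the range of validity. The increment relations quoted in the excerpt are stated for odd $i<2n$, whereas the fiber $\pi^{-1}(\{p,\y\})$ runs over $0\le i\le 2n(p-1)$, so I would need to confirm that the same tuple-independent increment pattern persists along the entire chain of exterior intersection points in the ordering fixed by Figure~\ref{fig:wind}, allowing one to telescope all the way down to $x_0$. I expect this to be the main obstacle of the writeup, though a routine one given the explicit combinatorics of the winding region; once it is in hand, the reduction to the outermost case is formal, and all genuine analytic input (the factor $p$ and the shift $\frac{(pn)(p-1)}{2}$) is inherited from Lemma~\ref{lemma:pindown}$(3)$, i.e.\ Lemma $3.6$ of \cite{OSThurston}.
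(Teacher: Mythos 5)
The paper does not actually reprove this statement: it is quoted as Lemma $3.4$ of \cite{Cabling}, where it is established directly by exhibiting explicit domains connecting the relevant generators and evaluating their multiplicities at $w$, $z$, and $z'$. Your route is genuinely different: you anchor the identities at the outermost points $x_0$ using parts $(2)$ and $(3)$ of Lemma \ref{lemma:pindown}, and then propagate to all $i$ by telescoping, using only the fact that the single-step increments $A(x_i,\cdot)-A(x_{i+1},\cdot)$ and $A'(x_i,\cdot)-A'(x_{i+1},\cdot)$ recorded in the preceding lemma are independent of the $(g-1)$-tuple. This is logically sound, and it buys something real: the lemma becomes a formal consequence of the other two inputs, and, as you observe, no connecting domain between $\{x_i,\y\}$ and $\{x_i,\z\}$ is ever needed, which matters since these generators can lie in different $\SpinC$ structures (the grading $A$ being defined via $c_1$ of relative $\SpinC$ structures in any case). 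Two caveats temper this. First, the point you flag is the genuine one: the increment relations are quoted only for odd $i<2n$, whereas the fiber of $\pi$ has $2n(p-1)+1$ elements, so your telescoping needs the tuple-independent alternating pattern over the full range $0\le i\le 2n(p-1)$; this is exactly what the table in Figure \ref{fig:chaincomplex} records, and it follows from the same winding-region combinatorics, but the quoted lemma does not literally assert it, so your writeup would have to supply it. Second, your base case for $A'$ rests on Lemma \ref{lemma:pindown}$(3)$, i.e.\ on Lemma $3.6$ of \cite{OSThurston}, a heavier and historically later input than the elementary domain count of \cite{Cabling}; your derivation therefore inverts the original logical order, which is harmless within the present paper's framework but worth being aware of.
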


  Lemma 3.5 of \cite{Cabling}  shows that, by making the cabling parameter $n$  sufficiently large, we can ensure that  exterior intersection points generate the highest $A$ and $A'$ gradings.  More precisely, we have the following restatement of Lemma 3.5 of \cite{Cabling}:
\begin{lemma}
\label{lemma:isolate}
Pick $l\in \Z$, and let 
$$g=\mathrm{max}\{ A(\x) \ | \ \x \text{\ is\ any\ intersection\ point,\ exterior\ or\ not} \}.$$
  Then there exists a constant  $N > 0$ such that for all $n$ with $n> N$,  the only intersection points with $A(\x)\ge g-l$ are exterior. 
\end{lemma}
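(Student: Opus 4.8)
The plan is to split the generators of $H(p,n)$ into the exterior points and the remaining non-exterior points, to control the top of the $A$-filtration for each family separately, and then to show that for $n$ large the exterior points completely dominate the top $l+1$ Alexander gradings. For the exterior points the three preceding lemmas give complete control of $A$, independently of $n$. By Lemma~\ref{lemma:pindown}(2) an outermost point satisfies $A(\{x_0,\y\})=i$ whenever $\y\in C(i)$, so the largest grading attained by an outermost point is the fixed integer $g_{\mathrm{ext}}:=\max\{i\mid C(i)\neq\emptyset\}$, namely the top Alexander grading of the diagram $H$ for $K$. The relations of Lemmas~3.3 and~3.4 of \cite{Cabling} show that $A(\{x_i,\y\})$ is non-increasing in the index $i$, and Lemma~\ref{lemma:varytuple} shows that replacing $\y\in C(j)$ by $\z\in C(k)$ shifts $A$ by $j-k$. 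Hence every exterior point has $A\le g_{\mathrm{ext}}$, with equality exactly at the outermost points lying over $C(g_{\mathrm{ext}})$; in particular the maximum of $A$ over all exterior points equals $g_{\mathrm{ext}}$ for every $n$.

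The crux is the non-exterior points: I would show that $A$ on them is bounded above by a quantity tending to $-\infty$ as $n\to\infty$. Within a fixed $\SpinC$ structure the Alexander gradings of two generators differ by $A(\x')-A(\x)=n_z(\phi)-n_w(\phi)$ for any connecting domain $\phi\in\pi_2(\x',\x)$. A non-exterior point is one whose $\tilde\beta$-component lies outside the winding neighborhood of $\mu$, on the stretch of $\tilde\beta$ that tracks the $n$-framed longitude. Taking $\x'=\x_0$ an outermost exterior point and $\x$ such a non-exterior point, any connecting domain must sweep across the winding region, so that its multiplicity $n_z(\phi)$ at $z$ grows at least linearly in $n$ while $n_w(\phi)$ stays bounded, since $w$ is held fixed throughout the winding. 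Hence $A(\x)=A(\x_0)-n_z(\phi)+n_w(\phi)\le g_{\mathrm{ext}}-cn+C$ for constants $c>0$ and $C$ independent of $n$, uniformly over the finitely many combinatorial types of non-exterior point; the $\SpinC$ structures that meet no exterior generator are handled by the same domain count after accounting for the bounded grading shift between adjacent structures.

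Combining the two steps, I would choose $N$ so large that $cN-C>l$. Then for every $n>N$ each non-exterior point has $A(\x)<g_{\mathrm{ext}}-l$, whereas the exterior maximum is exactly $g_{\mathrm{ext}}$; therefore the global maximum is $g=g_{\mathrm{ext}}$, and every intersection point with $A(\x)\ge g-l$ is exterior, as claimed.

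The main obstacle is the non-exterior bound of the second paragraph: one must verify, inside the explicit winding diagram, that \emph{every} domain from the outermost exterior generator to a non-exterior generator is forced to accrue multiplicity at $z$ proportional to $n$, with the multiplicity at the fixed basepoint $w$ remaining bounded, and that this estimate is uniform across the relevant $\SpinC$ structures. The cleanest route is to reuse the local model of the winding region from \cite{Cabling} that already underlies Lemmas~\ref{lemma:pindown}--\ref{lemma:varytuple}, and to track the two basepoint multiplicities directly, rather than to argue abstractly.
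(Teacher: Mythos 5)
Your proposal is correct and follows essentially the same route as the source: the paper does not reprove this lemma but quotes it as Lemma 3.5 of \cite{Cabling}, whose proof is exactly the winding-region estimate you describe (the exterior points have $A$ bounded by the top Alexander grading of $H$, while the finitely many families of non-exterior points have $A$ dropping linearly in $n$). The ``main obstacle'' you flag is not a real one: because $n_z(P)=n_w(P)$ for every periodic domain of a null-homologous knot, the difference $n_z(\phi)-n_w(\phi)$ is independent of the choice of $\phi\in\pi_2(\x_0,\x)$, so it suffices to exhibit a single domain that unwinds the spiral --- built from the same local bigons and rectangles used to prove Lemmas $3.3$ and $3.4$ of \cite{Cabling} --- rather than to control every connecting domain.
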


\noindent The lemmas are summarized by the table and caption in Figure \ref{fig:chaincomplex}.

\begin{figure}

\psfrag{1}{\small $ C(g)$}
\psfrag{a}{\small $x_0$ }
\psfrag{2}{\small $ C(g-1)$}
\psfrag{b}{\small $x_1$}
\psfrag{3}{\small $C(g-2)$}
\psfrag{c}{\small $x_2$}
\psfrag{4}{$\ldots$}
\psfrag{5}{\small $C(-g)$}
\psfrag{d}{\small $x_3$}
\psfrag{e}{\small $x_4$}
\psfrag{f}{\small $\vdots$}
\psfrag{g}{\small $x_5$}
\psfrag{a1}{\tiny $(g,g')$}
\psfrag{a2}{\tiny $(g-1,g'-p)$}
\psfrag{a3}{\tiny $(g-2,g'-2p)$}
\psfrag{a5}{\tiny $(g-2d,g'-2pd)$}
\psfrag{b1}{\tiny $(g-1, g'-1)$}
\psfrag{b2}{\tiny $(g-2,g'-p-1)$}
\psfrag{b3}{\tiny $(g-3,g'-2p-1)$}
\psfrag{b5}{\tiny $(g-2d-1,g'-2pd-1)$}
\psfrag{c1}{\tiny $(g-1,g'-p)$}
\psfrag{c2}{\tiny $(g-2,g'-2p)$}
\psfrag{c3}{\tiny $(g-3,g'-3p)$}
\psfrag{c5}{\tiny $(g-2d-1,g'-2pd-p)$}
\psfrag{d1}{\tiny $(g-2,g'-p-1)$}
\psfrag{d2}{\tiny $(g-3,g'-2p-1)$}
\psfrag{d3}{\tiny $(g-4,g'-3p-1)$}
\psfrag{d5}{\scriptsize $(g-2d-2,g'-2pd-p-1)$}
\psfrag{e1}{\tiny $(g-2,g'-2p)$}
\psfrag{e2}{\tiny $(g-3,g'-3p)$}
\psfrag{e3}{\tiny $(g-4,g'-4p)$}
\psfrag{e5}{\tiny $(g-2d-2,g'-2pd-2p)$}
\psfrag{g1}{\tiny $(g-3,g'-2p-1)$}
\psfrag{g2}{\tiny $(g-4,g'-3p-1)$}
\psfrag{g3}{\tiny $(g-5,g'-5p-1)$}
\psfrag{g5}{\tiny $(g-2d-n,g'-2pd-np)$}

\includegraphics{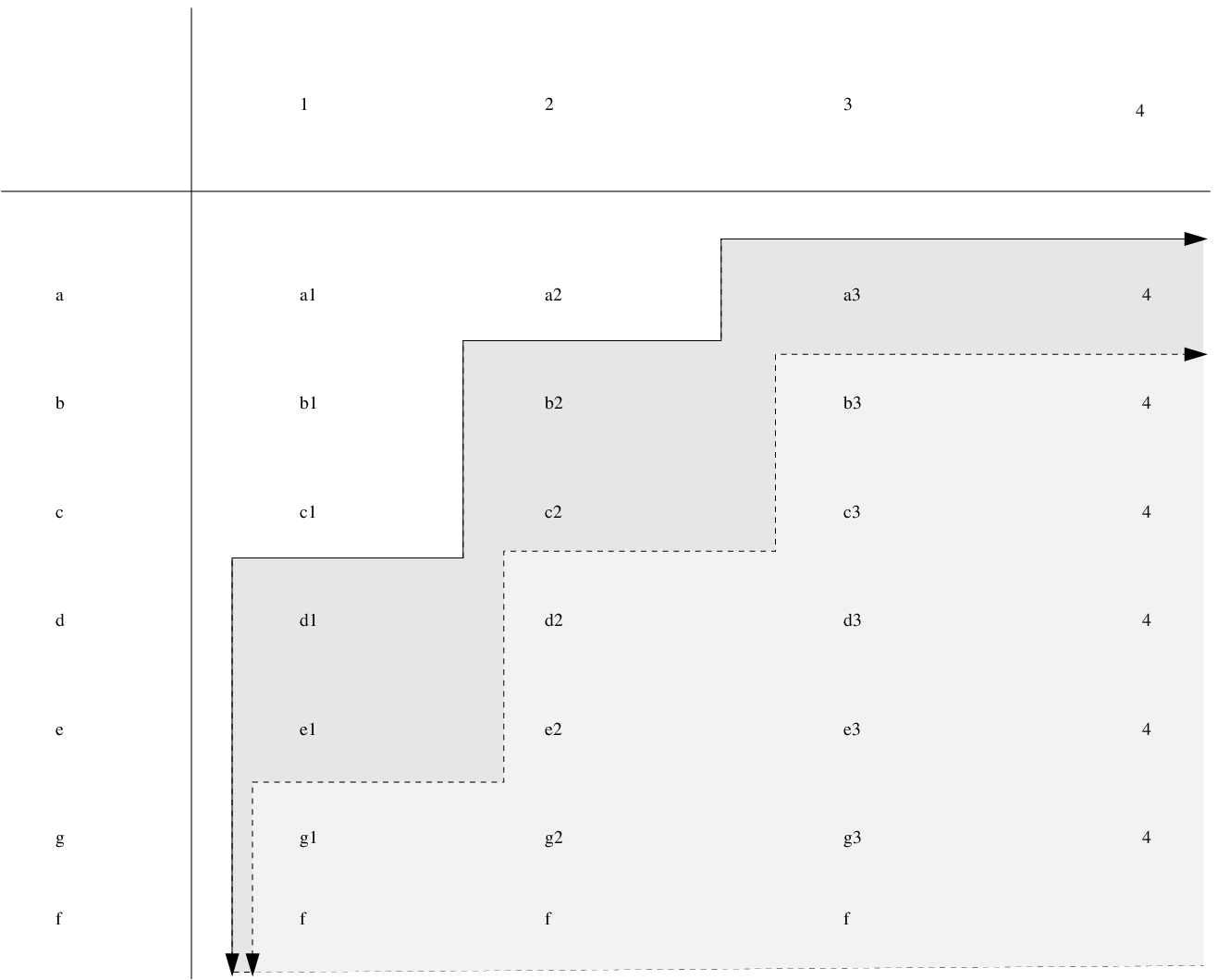}

\caption{\label{fig:chaincomplex}
Table of $A$ and $A'$ gradings of exterior points for the Heegaard diagram, $H(p,n)$.   For each ordered pair, the number on the left is the $A$ grading (i.e. the companion knot's grading). The number on the right is $A'$ (the cabled
  knot's grading).  According to Lemma \ref{lemma:pindown}, we have $g'=pg+\frac{(pn)(p-1)}{2}$. The area below the top (solid) line represents the chain complex $\FiltY(K_{p,pn+1},g'-p-1)=\FiltY(K,g-2)$.  The area below the bottom (dashed) line represents the chain complex $\FiltY(K_{p,pn+1},g'-2p-1)=\FiltY(K,g-3)$. }

\end{figure}

Theorem \ref{thm:filt} follows quickly from the table.  The key point is  that $H(p,n)$  is a diagram for $Y$, regardless of whether we use the basepoint $z$ or $z^\prime$. The only restriction on the differential for the filtered subcomplexes is that $n_w(\phi)=0$, which is independent of $z$ and $z^\prime$.  In light of these remarks and the table of filtrations, we have the following isomorphisms of  {\em chain complexes}, for each $\spinc\in \SpinC(Y)$ and $k<l$
$$ \begin{array}{ll} \FiltY(K_{p,pn+1},pg+\frac{(pn)(p-1)}{2}-p(k-1)-1))& = \FiltY(K,g-k)   \\
	\FiltY(K_{p,pn+1},pg+\frac{(pn)(p-1)}{2}-p(k-1)-i)& = \FiltY(K_{p,pn+1},pg+\frac{(pn)(p-1)}{2}-p(k-1)-i-1)) \\
&	\forall \ i=2,\ldots,p-1  \\ \end{array}$$

Note the appearance of the $\SpinC$ structure, $\spinc$.  Up to this point we had not distinguished between intersection points corresponding to different $\SpinC$ structures; indeed, the table represents all intersection points.  However, it is straightforward to see that the table splits as a direct sum of complexes according to $\SpinC$ structures on $Y$, yielding the above.  This follows from the fact that  part $(1)$ of Lemma \ref{lemma:pindown} actually applies to any exterior point in the table which, in turn, follows from the fact that $\{x_j,\y\}$ is connected to $\{x_0,\y\}$ by a Whitney disk (c.f the proof of Lemmas $3.3$ and $3.4$ of \cite{Cabling}).

Taking $l>g-M$ in Lemma \ref{lemma:isolate} and changing variables $j-1=g-k$ yields the first part of the theorem.  For the second part we let $M<-g(K)$, where $g(K)$ is the genus of $K$.  It follows from the adjunction inequality for knot Floer homology that 

$$\begin{array}{ll} H_*(\FiltY(K,j))=0  & \forall j<-g(K). \end{array}$$
	
\noindent The second part of the theorem now follows from the definition of $\tau_\alpha$. \ \ \ \ \ \ \ \ \ \ \ \ \ \ \ \ \ \ \ \ \ \ \ \ \ \ \ $\square$

Examining the Heegaard diagram $H(p,n)$ when $n<0$, we are  lead to:

\newpage
\begin{theorem}
\label{thm:filtYneg}
Let $K\subset Y$ be a knot and let  $\spinc\in\SpinC(Y)$.  Pick any $M\in \Z$. Then there exists a constant
$N(M)>0$ so that $\forall \ n>N$, the following holds for each $j< M$:
$$      H_*(\FiltY(K_{p,-pn+1},pj-\frac{(pn)(p-1)}{2}-1))\cong H_*(\FiltY(K,j-1)). $$
\noindent Furthermore, 
$$\begin{array}{ll}
	H_*(\FiltY(K_{p,-pn+1},pj-\frac{(pn)(p-1)}{2}+i))\cong &  \\ 
H_*(\FiltY(K_{p,-pn+1},pj-\frac{(pn)(p-1)}{2}+i+1)) & \forall \ i=1,\ldots,p-2. \\ 
\end{array}$$
In particular, 
$$\tau_\alpha(Y,K_{p,-pn+1})=\left\{\begin{array}{ll}
        p \tau_\alpha(Y,K)-\frac{(pn)(p-1)}{2}+p-1 & {\text{or}} \\
      p \tau_\alpha(Y,K)-\frac{(pn)(p-1)}{2}. & \\ 
\end{array}
\right. $$
\end{theorem}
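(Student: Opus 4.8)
The plan is to run the argument for Theorem \ref{thm:filtY} essentially verbatim, but with the winding of Lemma \ref{lemma:hd} performed in the opposite direction, and to keep careful track of the resulting sign changes. Concretely, I would begin with the diagram $H(p,n)$ now taking $n<0$: the curve $\tilde\beta$ is still obtained by winding the meridian $\mu$ $(p-1)$ times along an $n$-framed longitude, but the negative framing reverses the orientation of the winding region. The $(2|n|(p-1)+1)$-to-one correspondence $\pi$ between exterior intersection points and the generators of the complex for $K$ is unaffected, as is the labelling of a fibre $\pi^{-1}(\{p,\y\})$ by the points $x_i\in\alpha_g\cap\tilde\beta$; only the position of $z'$ relative to these points changes.

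I would then re-derive the three grading lemmas. The analogue of Lemma \ref{lemma:pindown} should read $A(\{x_0,\y\})=i$ exactly as before (this part is insensitive to $z'$), while the correction term in $A'(\{x_0,\y\})=pA(\{x_0,\y\})-\frac{(pn)(p-1)}{2}$ flips sign, reflecting the reversed orientation of the winding region (equivalently, the interchange of $n$ and $-n$ in the relative Chern-class computation of Lemma $3.6$ of \cite{OSThurston}). The analogues of Lemmas $3.3$ and $3.4$ of \cite{Cabling} (our unnamed lemma and Lemma \ref{lemma:varytuple}) acquire the corresponding sign changes in their $A'$-differences. Recording this in a table mirroring Figure \ref{fig:chaincomplex}, now with $g'=pg-\frac{(pn)(p-1)}{2}$, one sees that the large negative correction term drives the exterior points to the \emph{bottom} of the $A'$-filtration of the cable. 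The analogue of Lemma \ref{lemma:isolate} then isolates these points: for $n$ sufficiently negative, the relevant window of low cable gradings is populated only by exterior points, which reproduce the companion's filtration over its whole support $[-g(K),g(K)]$.

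Reading off the table exactly as in the positive case then yields, for each $\spinc\in\SpinC(Y)$, the equality of chain complexes
$$\FiltY(K_{p,-pn+1},pj-\frac{(pn)(p-1)}{2}-1)=\FiltY(K,j-1)$$
together with the stated equalities between consecutive cable filtration levels for $i=1,\ldots,p-2$; the splitting of the table over $\SpinC(Y)$ is justified as before, since part $(1)$ of Lemma \ref{lemma:pindown} extends to every exterior point through the Whitney disk joining $\{x_j,\y\}$ to $\{x_0,\y\}$. For the $\tau$-computation I would now invoke the adjunction inequality at the \emph{top} end: since $H_*(\FiltY(K,j))\cong\HFa(Y,\spinc)$ for all $j\ge g(K)$, choosing $M>g(K)$ makes the isomorphisms hold throughout the range in which $\alpha$ first enters $\Image(\iota_m)_*$, and Definition \ref{defn:tau_alpha} then pins $\tau_\alpha(Y,K_{p,-pn+1})$ to the two displayed values, the residual ambiguity of $p-1$ arising exactly as before from the constancy of homology across each block of $p$ consecutive cable gradings.

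The main difficulty is bookkeeping rather than conceptual: one must check that every inequality direction and every occurrence of $\pm\frac{(pn)(p-1)}{2}$ reverses consistently, so that ``top of the filtration'' becomes ``bottom'' and the index shifts run $+i$ rather than $-i$. The one point that genuinely needs verification, rather than blind mirroring, is that the $\SpinC$-invariance and Whitney-disk connectivity statements underlying the $\SpinC(Y)$-splitting are insensitive to the winding direction; this should hold because they depend only on $n_w$ and on homotopy classes of disks in the complement of $w$, neither of which sees the orientation of the winding region.
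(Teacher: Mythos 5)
Your proposal is correct and is exactly the argument the paper intends: the paper offers no separate proof of Theorem \ref{thm:filtYneg}, saying only that one is ``lead to'' it by examining $H(p,n)$ for $n<0$, and your sign-reversed rerun of the proof of Theorem \ref{thm:filtY} (exterior points now populating the bottom of the $A'$-filtration, the correction term becoming $-\frac{(pn)(p-1)}{2}$, and the adjunction inequality applied at the top end $\FiltY(K,j)=\CFa(Y,\spinc)$ for $j\ge g(K)$ to pin down $\tau_\alpha$) is precisely the intended mirroring. Your closing caveat about verifying that the $\SpinC$-splitting and Whitney-disk connectivity are insensitive to the winding direction is the right point to check, and it holds for the reason you give.
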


\subsection{Proof of Theorem \ref{thm:tau}}

We now turn to the proof of Theorem \ref{thm:tau}. We focus first on proving the inequality:
\begin{equation} \label{eq:tauinequality} p \tau(K)+ \frac{(pn)(p-1)}{2}\ \  \le\tau(K_{p,pn+1})\le \ \ 
     p \tau(K)+\frac{(pn)(p-1)}{2} +p-1. 
        \end{equation}
        
  This will follow rather quickly from   Theorems \ref{thm:filtY} and \ref{thm:filtYneg}, together with a crossing change inequality satisfied by $\tau$ to interpolate between the cases $n>0$ and $n<0$.   
Recall that two knots $K_+, K_- \subset S^3$ are said to differ by a crossing change if there exists an embedded three-ball $B^3\subset S^3$, outside of which the knots agree,$$(S^3,K_+)\backslash B^3 \cong (S^3,K_-)\backslash B^3,$$
\noindent and such that the local pictures of $(B^3, B^3\cap K_+)$ (resp. $(B^3, B^3\cap K_-)$) are given by Figure \ref{fig:crossingchange}. 
\begin{figure}
\psfrag{K+}{$K_+$}
\psfrag{K-}{$K_-$}

\begin{center}
\includegraphics{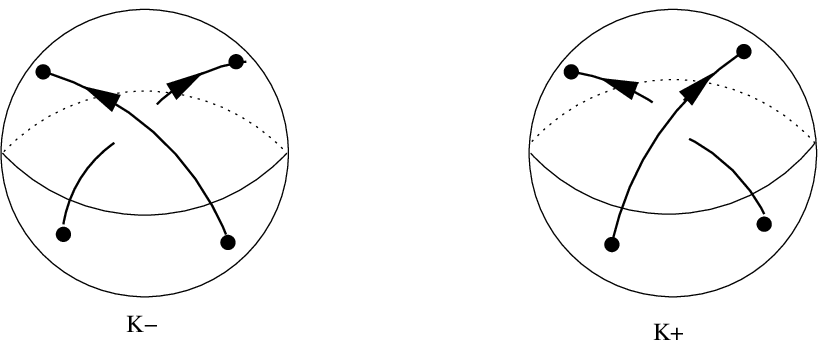}
\caption{\label{fig:crossingchange} A crossing change. $K_+,K_-\subset S^3$ agree, except in the three-ball pictured here.}
\end{center}
\end{figure}

Corollary $1.5$ of \cite{FourBall} states that if $K_+$ and $K_-$ differ by a crossing change, then we have the following inequality:
\begin{equation}
\label{eq:crossing}
\tau(K_+)-1\le \tau(K_-) \le \tau(K_+).
\end{equation}
Now it is straightforward to see that $K_{p,pl+1}$ can be changed into $K_{p,p(l-1)+1}$ by a sequence of $\frac{p(p-1)}{2}$ crossing changes, each of which changes a positive crossing to a negative (to see this induct on $p$ and, for the induction step, change the first $p-1$ crossings of a full twist on $p$ strands).   Thus
\begin{equation} \label{eq:ninequality} \tau(K_{p,pl+1}) -\frac{p(p-1)}{2} \le \tau(K_{p,p(l-1)+1})\le \tau(K_{p,pl+1}).
\end{equation}
Theorems \ref{thm:filtY} and \ref{thm:filtYneg} tell us that Inequality \eqref{eq:tauinequality} is satisfied for $K_{p,pn+1}$ provided $|n|>N$.  Combining this with Inequality \eqref{eq:ninequality} yields \eqref{eq:tauinequality} for all $n$.

\begin{remark} The proof of Inequality \eqref{eq:tauinequality}  will easily extend to each of the invariants $\tau_\alpha(Y,K)$, once a generalization of the crossing change inequality is established for knots in $Y$.  We defer the proof of this latter inequality to \cite{Slice}, where it will follow from the fact that $|\tau_\alpha(Y,K)|$ bounds the  genus of any smoothly embedded surface  $$i: (F,\partial F) \hookrightarrow Y\times [0,1],$$ with $i|_{\partial F} = K\subset Y\times \{1\}$.
\end{remark}

We turn now to the case when $\tau(K)=\pm g(K)$.  Consider the function defined in the introduction:
$$\ctauprime{n}:=\tau(K_{p,pn+1})$$ 

Inequality \eqref{eq:tauinequality} says that the graph of $\ctauprime{n}$ is bounded between the parallel lines \newline $y^{\pm}(n)=\frac{(pn)(p-1)}{2} + c^{\pm},$ where $$\begin{array}{ll} c^+= & p \tau(K) + p-1 \\  c^-= & p \tau(K) \end{array}$$

On the other hand, Theorem \ref{thm:filtY} tells us that for $n>N$, $\ctauprime{n}$ is either $y^+(n)$ or $y^-(n)$.  Similarly, Theorem \ref{thm:filtYneg} says that $\ctauprime{n}$ agrees with either $y^+(n)$ or $y^-(n)$, provided $n<-N$.

Next, we have a lemma:
 \begin{lemma} Suppose $\ctauprime{n}=y^+(n)$ for all $n>N$.  Then $\ctauprime{n}=y^+(n)$ for all $n$.  
 
\noindent  Likewise, suppose $\ctauprime{n}=y^-(n)$ for all $n<-N$.  Then $\ctauprime{n}=y^-(n)$ for all $n$.
 \end{lemma}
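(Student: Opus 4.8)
The plan is to exploit the crossing-change inequality \eqref{eq:crossing}, together with the fact (established in \eqref{eq:ninequality}) that passing from $K_{p,pl+1}$ to $K_{p,p(l-1)+1}$ is accomplished by exactly $\frac{p(p-1)}{2}$ crossing changes, each turning a positive crossing into a negative one. The key observation is that \eqref{eq:ninequality} is \emph{rigid} at its endpoints: since $\ctauprime{n}$ is pinned between the two parallel lines $y^{\pm}(n)$ whose vertical separation is exactly $p-1$, and since consecutive values $\ctauprime{l}$ and $\ctauprime{l-1}$ differ by an amount lying in the interval $[\frac{p(p-1)}{2}-\frac{p(p-1)}{2},\, \frac{p(p-1)}{2}]$ — wait, more precisely by \eqref{eq:ninequality} the quantity $\ctauprime{l}-\ctauprime{l-1}$ lies in $[0,\frac{p(p-1)}{2}]$ — the landing line can only change in one direction as $n$ decreases.

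\textbf{Main argument for the first claim.} Suppose $\ctauprime{n}=y^+(n)$ for all $n>N$. I would argue by downward induction on $n$ that $\ctauprime{n}=y^+(n)$ for \emph{all} $n$, not just $n>N$. The inductive step is the crux: assuming $\ctauprime{l}=y^+(l)$, I want to force $\ctauprime{l-1}=y^+(l-1)$. By \eqref{eq:ninequality} we have $\ctauprime{l-1}\le \ctauprime{l}=y^+(l)$, while $\ctauprime{l-1}\ge \ctauprime{l}-\frac{p(p-1)}{2}=y^+(l)-\frac{p(p-1)}{2}=y^+(l-1)$, using that $y^+$ has slope $\frac{(p-1)p}{2}$. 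Combining these, $y^+(l-1)\le \ctauprime{l-1}\le y^+(l)=y^+(l-1)+\frac{p(p-1)}{2}$. This confines $\ctauprime{l-1}$ to an interval of length $\frac{p(p-1)}{2}$ whose lower endpoint is exactly $y^+(l-1)$; but $\ctauprime{l-1}$ must equal either $y^+(l-1)$ or $y^-(l-1)=y^+(l-1)-(p-1)$, and the latter lies strictly below the interval. Hence $\ctauprime{l-1}=y^+(l-1)$, completing the induction. The symmetric claim for $y^-$ follows by upward induction: if $\ctauprime{l-1}=y^-(l-1)$, then \eqref{eq:ninequality} gives $\ctauprime{l}\ge \ctauprime{l-1}=y^-(l-1)$ and $\ctauprime{l}\le \ctauprime{l-1}+\frac{p(p-1)}{2}=y^-(l)$, so again $\ctauprime{l}$ is trapped with upper endpoint $y^-(l)$, forcing $\ctauprime{l}=y^-(l)$ since the only alternative $y^+(l)$ lies strictly above.

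\textbf{The gap to address.} The one subtlety I must confront is that the trichotomy ``$\ctauprime{n}$ equals $y^+(n)$ or $y^-(n)$'' is furnished by Theorems \ref{thm:filtY} and \ref{thm:filtYneg} only in the ranges $n>N$ and $n<-N$, respectively, whereas the induction sweeps through the intermediate range $-N\le n\le N$ where neither theorem directly applies. I expect this to be the main obstacle. The resolution is that Inequality \eqref{eq:tauinequality}, which holds for \emph{all} $n$ (as just proved at the start of this subsection from the two theorems plus the crossing-change interpolation), already confines $\ctauprime{n}$ to the closed strip between $y^-(n)$ and $y^+(n)$ for every $n$. What my induction additionally needs is that in this intermediate range the value is one of the two endpoints and not something strictly interior. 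For the special case $\tau(K)=\pm g(K)$ at hand, I would verify that the adjunction-type rigidity forces $\ctauprime{n}$ onto an integer lattice of step $\frac{p(p-1)}{2}$ translated appropriately, so that the only integer values inside the width-$(p-1)$ strip compatible with the monotonic descent \eqref{eq:ninequality} are the boundary lines themselves; alternatively, the cleanest route is to note that the induction never actually requires interior exclusion, because the two-sided pinch $y^+(l-1)\le \ctauprime{l-1}\le y^+(l-1)+\frac{p(p-1)}{2}$ combined with $\ctauprime{l-1}\le \ctauprime{l-2}+\frac{p(p-1)}{2}$ propagated from the base case $n>N$ keeps the value glued to the upper line regardless of the intermediate structure, so I would present the induction purely from \eqref{eq:ninequality} and the known boundary behavior without invoking the trichotomy in the middle range at all.
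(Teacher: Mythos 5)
Your argument is essentially the paper's: both rest on the two facts that $\ctauprime{n}$ lies in the strip $y^-(n)\le\ctauprime{n}\le y^+(n)$ for \emph{every} $n$ (Inequality \eqref{eq:tauinequality}) and that consecutive values satisfy \eqref{eq:ninequality}, so that the increment per unit of $n$ is at most the common slope $\frac{p(p-1)}{2}$ of the two bounding lines; the paper phrases this as ``the only way to remain bounded by $y^+$ is to drop by exactly $\frac{p(p-1)}{2}$ at each step,'' and your downward induction is the same computation. One correction: in your inductive step you close the argument by invoking the dichotomy ``$\ctauprime{l-1}$ equals $y^+(l-1)$ or $y^-(l-1)$,'' which is furnished by Theorems \ref{thm:filtY} and \ref{thm:filtYneg} only for $|l-1|$ large --- precisely the gap you then worry about. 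But the dichotomy is never needed: you have already derived $\ctauprime{l-1}\ge y^+(l-1)$ from \eqref{eq:ninequality} and the inductive hypothesis, and the universal upper bound $\ctauprime{l-1}\le y^+(l-1)$ from \eqref{eq:tauinequality} closes the step by itself (symmetrically, the $y^-$ claim closes using the universal lower bound of \eqref{eq:tauinequality}). Your final remark about the two-sided pinch keeping the value glued to the upper line is the correct resolution; the detours through the trichotomy, the extra inequality $\ctauprime{l-1}\le\ctauprime{l-2}+\frac{p(p-1)}{2}$, and the ``integer lattice'' rigidity are unnecessary and should be deleted.
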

 \begin{proof} The lemma follows easily from the fact $\ctauprime{n}$ is bounded between $y^\pm(n)$ for all $n$, together with Inequality \eqref{eq:ninequality}.  More precisely, Inequality \eqref{eq:ninequality} says that $$\ctauprime{n}-\ctauprime{n-1}\le \frac{p(p-1)}{2},$$ \noindent for all $n$. However, if $\ctauprime{n}=y^+(n)$ for all $n>N$, then the only way for $\tau^p$ to be bounded by $y^+$ (for all $n$) is if $\tau^p$ decreases by exactly $\frac{p(p-1)}{2}$ each time we decrease $n$ by one. Similar considerations hold if $\ctauprime{n}=y^-(n)$ for all $n<-N$.
 \end{proof}
 
We will show that if $\tau(K)=-g(K)$, then  $\ctauprime{n}=y^+(n)$ for all $n>N$.  Similarly, if $\tau(K)=g(K)$ then  $\ctauprime{n}=y^-(n)$ for all $n<-N$.  To this end, we have:

\begin{theorem}
\label{thm:largen} \lbra Theorem $1.2$ of \cite{Cabling}\rbra \  Let $K\subset S^3$ be a knot of genus $g$. Pick any $M\in \Z$. Then there exists a constant
$N(M)>0$ so that $\forall \ n>N$, the following holds for each $j< M$: 
$$ \HFKa_*(K_{p,pn+1},i)\cong 
\left\{\begin{array}{ll}

 H_{*+2(j-g)}(\Filt(K,j-g)) & {\text{for
 $i=pg+\frac{(p-1)(pn)}{2}-pj$}} \\

 H_{*+2(j-g)+1}(\Filt(K,j-g)) & {\text{for $i=pg+\frac{(p-1)(pn)}{2}-pj-1$}} \\

0 & {\text{otherwise.}}\\
\end{array}
\right. $$
\end{theorem}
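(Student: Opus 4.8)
The plan is to compute the associated graded complex $\CFKa(K_{p,pn+1},i)$ of the $A'$-filtration directly from the winding diagram $H(p,n)$ of Lemma \ref{lemma:hd}, exploiting the fact that a single diagram records both the filtration of $K$ (through the grading $A$, defined with $(w,z)$) and that of $K_{p,pn+1}$ (through $A'$, defined with $(w,z')$). Fixing $M$, I would first apply Lemma \ref{lemma:isolate} with $l$ large enough, depending on $M$, so that for all $n>N(M)$ every generator occurring in the $A'$-gradings relevant to the range $j<M$ is an exterior point. The computation then reduces to sorting the exterior points by their $A$ and $A'$ gradings, which are supplied by Lemma \ref{lemma:pindown}, Lemmas $3.3$ and $3.4$ of \cite{Cabling}, and Lemma \ref{lemma:varytuple}, and organized in the table of Figure \ref{fig:chaincomplex}.

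Writing $g'=pg+\frac{(pn)(p-1)}{2}$, the gradings single out two residue classes modulo $p$. For $i=g'-pj$ the exterior points with $A'=i$ are exactly the even ones $\{x_{2k},\y\}$ with $\y\in C(g-j+k)$ and $k\ge 0$; all of these share the companion grading $A=g-j$, and under the $(2n(p-1)+1)$-to-one projection $\pi$ they correspond bijectively to the generators $\{p,\y\}$ of $K$ of Alexander grading $\ge g-j$. For $i=g'-pj-1$ the contributing points are instead the odd ones $\{x_{2k+1},\y\}$, $\y\in C(g-j+k)$, which project to the same companion generators. For every other value of $i$ modulo $p$ no exterior point attains $A'=i$, which yields the vanishing asserted in the third case.

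The crux is to identify the associated graded differential, which counts index-one disks $\phi$ with $n_w(\phi)=0$ preserving $A'$. Since the even exterior points at $A'=i$ all share $A=g-j$, any such disk also satisfies $n_z(\phi)=0$ \emph{in} $H(p,n)$; the key phenomenon, which I would extract from the domain analysis underlying Lemmas $3.3$ and $3.4$ of \cite{Cabling}, is that the winding region collapses under $\pi$, so that the projected disk in $H$ acquires multiplicity $n_z=k-k'$. Consequently these disks realize the \emph{full} filtered differential of $K$, its $A$-decreasing disks included, rather than merely the associated-graded differential. Showing that projection is a bijection onto the index-one filtered disks of $K$ for $n\gg 0$ would identify $\CFKa(K_{p,pn+1},g'-pj)$ with the quotient complex of $\CFa(S^3)$ on generators of Alexander grading $\ge g-j$, namely $\CFa(S^3)/\Filt(K,g-j-1)$, with its full induced differential; the case $i=g'-pj-1$ gives the same quotient, shifted by one in Maslov grading.

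Finally I would invoke the conjugation symmetry of the knot filtration \cite{Knots} to rewrite this quotient, whose generators have $A\ge g-j$, as the subcomplex $\Filt(K,-(g-j))=\Filt(K,j-g)$, and track Maslov gradings through Lemma \ref{lemma:pindown}, the Whitney disk connecting $\{x_j,\y\}$ to $\{x_0,\y\}$, and the symmetry shift, so as to produce the homological shifts $*+2(j-g)$ and $*+2(j-g)+1$. The main obstacle is the disk count of the previous paragraph: one must verify that every $A'$-preserving, $n_w=0$ disk among exterior points projects to a genuine filtered disk of $K$ with no spurious winding-region contributions once $n$ is large, and conversely that every filtered disk of $K$ lifts uniquely. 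This is precisely the domain bookkeeping carried out in \cite{Cabling}, and with it in hand the symmetry and grading-shift steps are routine.
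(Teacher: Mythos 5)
First, a point of comparison: this paper does not prove Theorem \ref{thm:largen} at all --- it is imported verbatim as Theorem $1.2$ of \cite{Cabling} --- so your argument has to be measured against the proof in that predecessor. Your generator-level bookkeeping agrees with Figure \ref{fig:chaincomplex} and with \cite{Cabling}: the exterior points with $A'=g'-pj$ are exactly the $\{x_{2k},\y\}$ with $\y\in C(g-j+k)$, all sharing $A=g-j$; the odd points account for $A'=g'-pj-1$; and the remaining residues mod $p$ are empty. The gap is in your identification of the associated-graded differential. You claim that an $A'$-preserving, $n_w=0$ disk from $\{x_{2k},\y\}$ to $\{x_{2k'},\y'\}$ unwinds to a disk of $H$ from $\{p,\y\}$ to $\{p,\y'\}$ with $n_w=0$ and $n_z=k-k'$, so that the graded piece becomes the quotient complex $\CFa(S^3)/\Filt(K,g-j-1)$ carrying the full $w$-based filtered differential of $K$. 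The winding-region domains in fact go the other way: such a disk has $n_w=n_z=n_{z'}=0$ in $H(p,n)$, and since $A(\{p,\y\})-A(\{p,\y'\})=k-k'=n_z-n_w$ must be realized in $H$ by a positive domain, the unwound disk has $n_z=0$ and $n_w=k'-k\ge 0$. These are the differentials of the complex in which the roles of $w$ and $z$ are exchanged, and the graded piece is a conjugate copy of the \emph{subcomplex} $\Filt(K,j-g)$ itself, not the quotient complex.

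The difference is fatal, not cosmetic. Let $K$ be the right-handed trefoil, whose $\CFa(S^3)$ has three generators with $(A,M)$ equal to $(1,0)$, $(0,-1)$, $(-1,-2)$ and a single differential from the second to the third. With $p=2$, $g=1$, $j=1$ the theorem predicts $\HFKa(K_{2,2n+1},n)\cong H_*(\Filt(K,0))=0$, which is correct: by Theorem \ref{thm:lspace} this cable is an L-space knot for $n\ge 1$, and the coefficient of $t^n$ in its Alexander polynomial vanishes, so the group must be zero. Your route instead produces $H_*(\CFa(S^3)/\Filt(K,-1))\cong\F^2$, since passing to the quotient kills the one differential and leaves both remaining generators as cycles. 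The same example shows that your final ``conjugation symmetry'' step is independently false: the symmetry exchanges the two $\Z$-filtrations of $\CFinf(S^3)$ and yields $\HFKa_d(K,s)\cong\HFKa_{d-2s}(K,-s)$ on associated graded pieces, but it does not identify the homology of the quotient complex $\CFa(S^3)/\Filt(K,m-1)$ with that of the subcomplex $\Filt(K,-m)$ up to a shift (again $\F^2$ versus $0$ for the trefoil with $m=0$). To repair the argument you must carry out the winding-region domain analysis of \cite{Cabling} and verify that the surviving index-one disks are the $n_z=0$ ones; once that is done the identification with $\Filt(K,j-g)$ is direct and no sub/quotient symmetry is needed.
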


Considering $j=2g$ in the above theorem shows that 
$$\HFKa(K_{p,pn+1},-pg+\frac{(p-1)(pn)}{2})\cong \Z_{(-2g)}$$
\noindent This shows that $\tau(K_{pn+1})\ne -pg+\frac{(p-1)(pn)}{2}$; there is no homology in Alexander grading $-pg+\frac{(p-1)(pn)}{2}$ of the appropriate Maslov grading (provided $K$ is a non-trivial knot).  If $\tau(K)=-g$, then $y^-(n)= -pg+\frac{(p-1)(pn)}{2}$, and hence $\ctauprime{n}=y^+(n)$ for all $n>N$.   Using Theorem $3.8$ of \cite{Cabling} in place of Theorem \ref{thm:largen} above, the same argument shows that if $\tau(K)=g(K)$ then  $\ctauprime{n}=y^-(n)$ for all $n<-N$.  This completes the proof.
\end{proof}

\subsection{Proof of Theorem \ref{thm:lspace}}

 The strategy here is to show that if $K$ is an L-space knot, then surgery on $K_{p,q}$ will be an L-space for $q$ large enough.     We will achieve this through a standard topological argument, together with formulas for the Floer homology of manifolds obtained by Dehn surgery on knots and connected sums, respectively.  More precisely, Theorem \ref{thm:lspace} is an immediate consequence of the following facts:
 \begin{enumerate}
\item $pq$ surgery on $K_{p,q}$ is the manifold $S^3_{q/p}(K)\# L(p,q)$, where $S^3_{q/p}(K)$ is the manifold obtained by $q/p$ Dehn surgery on $K$.
\item The lens space, $L(p,q)$, is an L-space
\item If any positive surgery on $K$ yields an L-space, then $q/p$ surgery on $K$ is an L-space for any $q/p\ge 2g(K)-1$, where $g(K)$ is the genus of $K$.
\item If $Y_1$ and $Y_2$ are L-spaces, then $Y_1\#Y_2$ is an L-space.

\end{enumerate}

The first fact is well-known to those working with Dehn surgery.  For completeness, we include a proof below. The Floer homology of lens spaces can easily be computed from their genus one Heegaard diagram, verifying $(2$).   The third fact follows from a general formula which computes the Floer homology of manifolds obtained by surgery on $K$ in terms of the knot Floer homology invariants \cite{RationalSurgeries,Lens}. Specifically, we have
\begin{lemma} 
\label{lemma:CalcRanks}
Let $K\subset S^3$ be an L-space knot, and fix a pair of relatively prime integers $p$ and $q$.
Then
$$
\Rk \HFa(S^3_{q/p}(K)) =
|q| + 2\max(0,(2g(K)-1)|p|-|q|).
$$
\end{lemma}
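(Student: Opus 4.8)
The plan is to compute $\Rk \HFa(S^3_{q/p}(K))$ directly from the rational surgery formula of \cite{RationalSurgeries}, exploiting the fact that for an L-space knot the full knot Floer complex $\CFKinf(K)$ is completely determined by the Alexander polynomial, as established in \cite{Lens}. Recall that the rational surgery formula expresses $\HFa(S^3_{q/p}(K))$ as the homology of a mapping cone built from $|q|$-many copies (indexed by $\Z/q\Z$, or rather by a set of $|q|$ representatives) of the complexes $\Fa{}$ and $\widehat B = \HFa(S^3)$, connected by the horizontal and vertical projection maps $\widehat{v}_s$ and $\widehat{h}_s$. The key structural input is that for an L-space knot these maps are as simple as possible: each $\widehat{v}_s$ and $\widehat{h}_s$ is either an isomorphism or trivial, depending on the sign of $s$ relative to the genus, since the relevant truncated complexes $\Fa{s}$ each have homology of rank exactly $1$.

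First I would set up the mapping cone $\mathbb{X}(q/p)$ with its two rows $\bigoplus_s \Fa{s}$ and $\bigoplus_s \widehat B_s$, and recall from \cite{Lens} that for an L-space knot $H_*(\Fa{s}) \cong \F$ for every $s$, with the map $\widehat{v}_s$ an isomorphism for $s \geq g(K)$ and $\widehat{h}_s$ an isomorphism for $s \leq -g(K)$, while in the intermediate range one of the two maps can fail to be surjective. Second, I would count the rank of the homology of this mapping cone by a linear-algebra argument: the rank of the homology of a mapping cone of a map $f$ between complexes with one-dimensional homologies is governed by the number of $s$ at which the connecting maps fail to be isomorphisms. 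Since each truncated piece contributes rank $1$, the total rank is $|q|$ plus twice a correction term counting the ``surviving'' generators that are neither killed nor in the image, and this correction is precisely the number of indices in the window where both $\widehat{v}_s$ and $\widehat{h}_s$ are zero.

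Third, I would carry out the bookkeeping that converts this count into the closed formula $|q| + 2\max(0,(2g(K)-1)|p| - |q|)$. The number of copies of $\widehat B$ in the mapping cone is $|q|$, so the ``baseline'' rank is $|q|$; the excess arises exactly when the parameter $q/p$ is small relative to $2g(K)-1$, so that there are indices $s$ for which neither surgery map is an isomorphism. A careful count of how the $\Z/q$-indexing interacts with the $p$-fold spacing of the relevant $s$-values (coming from the denominator $p$ in $q/p$) yields the factor $(2g(K)-1)|p| - |q|$, and the $\max(0,\cdot)$ reflects the fact that once $|q| \geq (2g(K)-1)|p|$ the intermediate window is empty and the surgered manifold is itself an L-space.

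The main obstacle I expect is the precise index bookkeeping in the mapping cone for a \emph{rational} (as opposed to integer) surgery coefficient: one must correctly identify which of the $|q|$ translates of the maps $\widehat{v}$ and $\widehat{h}$ are isomorphisms, keeping track of the floor/ceiling shifts inherent in the $q/p$ surgery formula. The reduction to L-space knots makes each individual complex trivial to handle, so the entire difficulty is combinatorial rather than analytic. I would organize this count so that the two regimes $|q| \geq (2g(K)-1)|p|$ and $|q| < (2g(K)-1)|p|$ are treated uniformly by the $\max$, and verify the answer against the special case $p=1$ (integer surgery), where the formula must reduce to the known rank computation of \cite{Lens}.
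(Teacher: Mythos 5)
Your proposal is correct in substance but takes a genuinely different (and more labor-intensive) route than the paper. The paper's proof is essentially a two-line citation: it invokes Proposition $9.5$ of \cite{RationalSurgeries} verbatim, and merely checks that for an L-space knot the two inputs to that formula simplify --- namely $\nu(K)=g(K)$ (since $\tau(K)=g(K)$ by \cite{Lens}, $\nu\in\{\tau,\tau+1\}$, and $\nu\le g$ by adjunction), and the correction term involving $\Rk\, H_*(\widehat{A}_s)$ vanishes because each $\widehat{A}_s$ has homology of rank one. What you propose is, in effect, to re-prove that proposition in the L-space case by running the rational surgery mapping cone by hand; this is self-contained and instructive, but duplicates work the paper outsources. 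Two points of bookkeeping you should tighten if you carry this out. First, the truncated mapping cone does not consist of ``$|q|$-many copies of $\widehat{A}$'': rather, each $\widehat{A}_j$ appears with multiplicity $|p|$ (the denominator of the surgery coefficient $q/p$), and it is the \emph{difference} between the number of $\widehat{B}$- and $\widehat{A}$-summands in the truncation that produces the baseline $|q|=|H_1|$; this is exactly where the factor $(2g(K)-1)|p|$ comes from, as you note later. Second, your phrase ``one of the two maps can fail to be surjective'' in the intermediate range understates the key point: for an L-space knot each $\widehat{v}_s,\widehat{h}_s$ is a map $\F\to\F$, hence either an isomorphism or zero, and for $-g(K)<s<g(K)$ \emph{both} vanish --- this is precisely what yields the factor of $2$ in the correction term (each such index contributes one generator to the kernel and one to the cokernel of the induced map on homology). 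With those two counts made precise, and the standard identification $\Rk\, H_*(\mathrm{Cone}(f))=\Rk\ker f_*+\Rk\mathrm{coker}\, f_*$ over $\F$, your argument closes up and agrees with the paper's formula, including the sanity check at $p=1$.
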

\begin{proof} The lemma is a particular case of Proposition $9.5$ of \cite{RationalSurgeries}.   Specifically, if $K$ is an L-space knot, results of \cite{Lens} show that $\tau(K)=g(K)$.  This implies that the term $\nu(K)$ appearing in Proposition $9.5$ of \cite{RationalSurgeries} is equal to the genus, since $\nu(K)$ is equal to $\tau(K)$ or $\tau(K)+1$ by definition, and $\nu(K)\le g(K)$ by the adjunction inequality (Theorem $5.1$ of \cite{Knots}).  The term in Proposition $9.5$ involving rk $H_*(\widehat{A}_s)$ vanishes under the assumption that $K$ is an L-space knot, since in this case rk $H_*(\widehat{A}_s)=1$.
\end{proof}

With the lemma in hand, $(3)$ follows immediately: if  $q/p\ge 2g(K)-1$, the second term in the proposition vanishes and $$\Rk\  \HFa(S^3_{q/p}(K)) =
|q| = |H_1(S^3_{q/p}(K);\Z)|.$$

The last fact follows from a  K{\"u}nneth type formula for the Floer homology of manifolds obtained by connected sum, Theorem $1.5$ of \cite{HolDiskTwo}.  This theorem says that the Floer homology of the connected sum, $Y_1\#Y_2$, can be computed from a chain complex quasi-isomorphic to the tensor product of Floer chain complexes for $Y_1$ and $Y_2$.  In particular, it implies that $$\mathrm{rk}\ \HFa(Y_1\#Y_2)=\mathrm{rk}\ \HFa(Y_1)\cm \mathrm{rk} \ \HFa(Y_2).$$ Thus $(4)$ follows from the definition of an L-space and the elementary observation that $$|H_1(Y_1\#Y_2;\Z)|=|H_1(Y_1;\Z)|\cm |H_1(Y_2;\Z)|.$$

We conclude by showing that $pq$ surgery on $K_{p,q}$ is $S^3_{q/p}(K)\# L(p,q)$.  To see this, decompose $S^3$ as $$S^3=E(K) \underset{T_K} \amalg N(K),$$ where $N(K)$ is a tubular neighborhood of $K$, $E(K)=S^3-N(K)$, and $T_K=\partial N(K)$ (see Figure \ref{fig:cablesurgery}).  $K_{p,q}$ is embedded in $T_K$ as a curve of slope $p/q$. Here, the meridian of $K$ has slope $0/1$, while the longitude has slope $1/0$.  
meridional
Consider next the tubular neighborhood of the cable.  Denote this by $N(K_{p,q})$.   The intersection $A=N(K_{p,q})\cap T_K$ is an annular neighborhood of $K_{p,q}$  in $T_K$.   The boundary of this annulus consists of two parallel copies of $K_{p,q}$, which we denote by $\lambda$ and $\lambda'$, each of which have linking number  $pq$ with $K_{p,q}$.  
    Let us examine the result of surgery on $K_{p,q}$ with framing given by $\lambda$ (or equivalently, $\lambda'$).  The fact that lk$(K_{p,q},\lambda)=pq$ is equivalent to the slope of the surgery being $pq/1$.  Now the exterior of $K_{p,q}$ can be decomposed as $$ E(K_{p,q})= E(K) \underset{T_K-A} \amalg N(K).$$ 
\noindent  (Figure \ref{fig:cablesurgery2}) Since $K_{p,q}$ is an essential curve on $T_K$, we see that $T_K-A$ is an annulus.  Now the surgery is performed by gluing a solid torus $D^2\times S^1$ to $E(K_{p,q})$ in such a way that the boundary of  each meridional disk is  identified with a curve on $\partial N(K_{p,q})$ isotopic to $\lambda$.  This gluing can, equivalently, be thought of as attaching a pair of two-handles (Figure \ref{fig:cablesurgery3}) $H_1=D^2\times [0,\pi]$, $H_2= D^2\times [\pi, 2\pi]$ to $E(K_{p,q})$, so that 
$$ S^3_{pq}(K_{p,q})= [ E(K)\underset{\partial D^2\times [0,\pi] }\sqcup H_1 ] \underset {D^2 \times \{0\} \sqcup (T_K-A) \sqcup D^2\times \{\pi\} } \amalg [H_2 \underset{\partial D^2 \times [\pi,2\pi] }\sqcup N(K)].$$
\noindent Since $K_{p,q}$ is a  $p/q$ curve on $T_K=\partial N(K)$, the handle $H_2$ is attached to the solid torus $N(K)$ along a curve of slope $p/q$.   From the perspective of $E(K)$, however, $K_{p,q}$ is a curve of slope $q/p$.  It follows that term on the left is $S^3_{q/p}(K)-D^3$, while the term on the right is $L(p,q)-D^3$.  These two three-manifolds are  joined along their common ($2$-sphere) boundary, $D^2 \times \{0\} \sqcup (T_K-A) \sqcup D^2\times \{\pi\}$, completing the proof.  \ \ \ \ \ \ \ \ \ \ \ \ \ \ \ \ \ \ \ \ \ \ \ \ \ \ \  \ \ \ \ \ \ \ \ \ \ \ \ \ \ \ \ \ \ \ \ \ \  \ \ \ \ \ \ \ \ \ \ \ \ \ \ \ \ \ \  \ \ \ \ \ \ \ \ \ \ \ \ \ \ \ \ \ \ \ $\square$

\begin{figure}

\psfrag{N(K)}{\tiny $N(K)$}
\psfrag{N(K')}{\small $N(K)$}
\psfrag{K}{\tiny $K$}
\psfrag{S^3}{\small $S^3$}
\psfrag{T}{\tiny $T_K$}
\psfrag{A}{\small $A$}
\psfrag{K'}{\small$K_{p,q}$}
\psfrag{E(K)}{\tiny $E(K)$}

\includegraphics{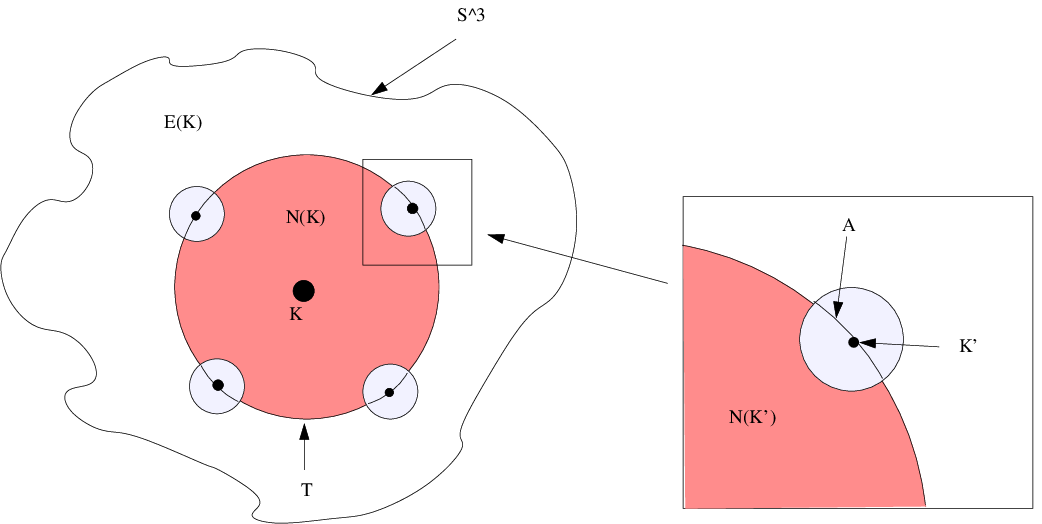}

\caption{\label{fig:cablesurgery}
 }

\end{figure}

\bigskip 
\begin{figure}
\psfrag{E(K)}{\tiny $E(K)$}

\psfrag{N(K)}{\tiny $N(K)$}
\psfrag{N(K')}{\tiny $N(K_{p,q})$}
\psfrag{K}{\tiny $K$}
\psfrag{S^3}{\small $E(K_{p,q})$}
\psfrag{T}{\tiny $T_K\sd A$}
\psfrag{T'}{\small $T_K\sd A$}

\psfrag{K'}{\small$\lambda$}
\psfrag{K''}{\small$\lambda'$}

\includegraphics{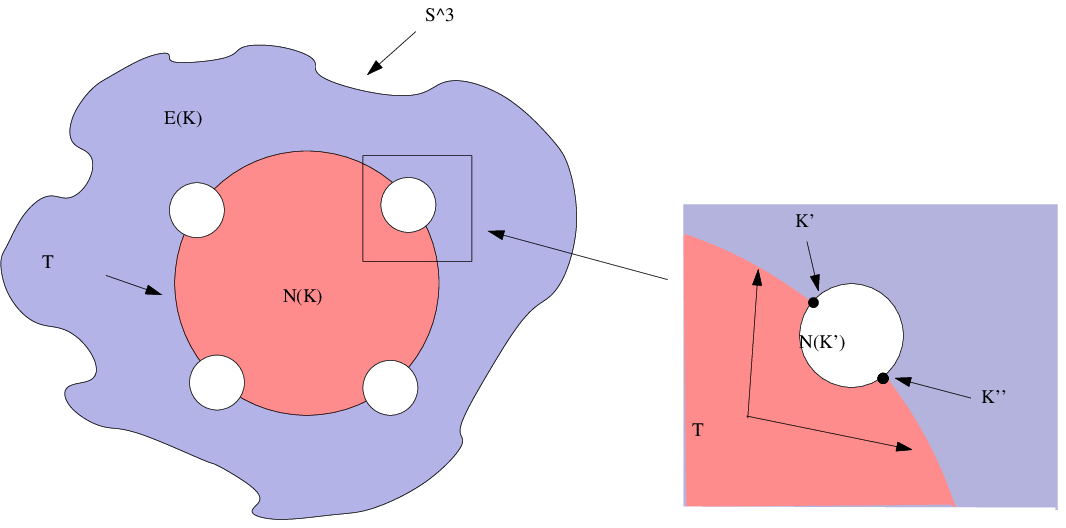}

\caption{\label{fig:cablesurgery2}
 }

\end{figure}

\begin{figure}

\psfrag{D1}{\small $D^2\times\{0\}$}
\psfrag{D2}{\small $D^2\times\{\pi\}$}
\psfrag{H1}{\small $H_1\cong D^2\times[0,\pi]$}
\psfrag{H2}{\small $H_2\cong D^2\times[\pi,2\pi]$}

\includegraphics{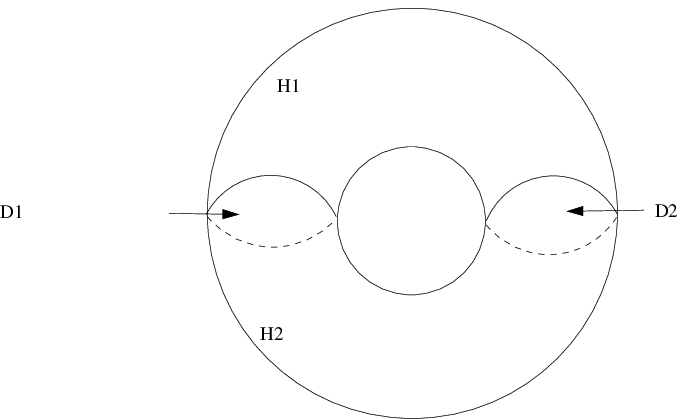}

\caption{\label{fig:cablesurgery3}
 }

\end{figure}

\section{Proof of Corollaries}
\label{sec:cor}
In this section we prove the corollaries stated in the introduction.  The heart of the corollaries is that $\tau(K)$ carries a great deal of geometric information, and thus can be used in conjunction with Theorem \ref{thm:tau} to obstruct cables of $K$ from having certain geometric or braid theoretic properties.

For instance, we can derive several consequences from the second half of Theorem \ref{thm:tau}.  To make this precise, define
 $$\Pos := \{ K\subset S^3 \ | \ \tau(K)=g(K)\}.$$   The following is an immediate corollary of Theorem \ref{thm:tau}:
\begin{cor} \label{cor:pos} \ \ 

 \begin{itemize}
 \item If $K\in \Pos$, then $K_{p,pn+1}\in \Pos$ if and only if $n\ge 0$.
 \item If $K\notin \Pos$, then $K_{p,pn+1}\notin \Pos$ for any $n$.
 \end{itemize}
\end{cor}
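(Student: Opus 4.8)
The plan is to reduce the corollary to a direct comparison between $\tau(K_{p,pn+1})$, which is pinned down by Theorem~\ref{thm:tau}, and the Seifert genus $g(K_{p,pn+1})$, which is computed by a classical formula. First I would record the genus of the cable. By the classical additivity of Seifert genus under cabling (Schubert),
$$ g(K_{p,pn+1}) = p\,g(K) + g(T_{p,pn+1}), \qquad g(T_{p,pn+1}) = \frac{(p-1)(|pn+1|-1)}{2}. $$
Since $p\geq 2$ and $n\in\Z$, we have $pn+1>0$ precisely when $n\geq 0$ and $pn+1<0$ precisely when $n<0$ (the value $pn+1=0$ never occurs), so splitting on this sign gives
$$ g(K_{p,pn+1}) = \begin{cases} p\,g(K) + \frac{(pn)(p-1)}{2} & n\geq 0, \\ p\,g(K) - \frac{(pn)(p-1)}{2} - (p-1) & n<0. \end{cases} $$
This is the only ingredient not already available in the excerpt.

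For the first bullet, suppose $K\in\Pos$, i.e. $\tau(K)=g(K)$. Then the last clause of Theorem~\ref{thm:tau} gives the exact value $\tau(K_{p,pn+1}) = p\,g(K) + \frac{(pn)(p-1)}{2}$ for every $n$. Comparing with the two cases above, when $n\geq 0$ this equals $g(K_{p,pn+1})$, so $K_{p,pn+1}\in\Pos$; when $n<0$ the difference is
$$ g(K_{p,pn+1}) - \tau(K_{p,pn+1}) = -(p-1)(pn+1) > 0, $$
using $pn+1<0$ and $p-1>0$, so $\tau<g$ and $K_{p,pn+1}\notin\Pos$. This establishes the ``if and only if''.

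For the second bullet, suppose $K\notin\Pos$. Since $\tau(K)\leq g_4(K)\leq g(K)$ always and both quantities are integers, this forces $g(K)-\tau(K)\geq 1$. Here I would use only the upper bound $\tau(K_{p,pn+1}) \leq p\,\tau(K) + \frac{(pn)(p-1)}{2} + (p-1)$ from Theorem~\ref{thm:tau}. Subtracting it from the genus: for $n\geq 0$ one gets $g(K_{p,pn+1}) - \tau(K_{p,pn+1}) \geq p(g(K)-\tau(K)) - (p-1) \geq 1$, and for $n<0$, using in addition $n\leq -1$ (hence $-(pn)(p-1)\geq p(p-1)$), one gets $g(K_{p,pn+1}) - \tau(K_{p,pn+1}) \geq p + (p-1)(p-2) > 0$. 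In both cases $\tau<g$, so $K_{p,pn+1}\notin\Pos$ for every $n$.

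The only genuinely external input is the cable genus formula, and the single point requiring care is the absolute value $|pn+1|$ in the torus-knot genus: the sign of $pn+1$ is exactly what distinguishes the $n\geq 0$ and $n<0$ branches, and getting it right is what makes the first bullet an honest equivalence rather than a one-sided statement. Beyond this bookkeeping I anticipate no obstacle; I would also double-check the boundary case $n=0$, where $T_{p,1}$ is the unknot and contributes zero genus, to confirm it sits consistently in the $n\geq 0$ branch.
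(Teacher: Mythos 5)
Your proof is correct and follows exactly the route the paper intends: Corollary \ref{cor:pos} is stated there as an ``immediate'' consequence of Theorem \ref{thm:tau}, and the only ingredient the paper leaves implicit is the one you supply, namely the cable genus formula $g(K_{p,q}) = p\,g(K) + \tfrac{(p-1)(|q|-1)}{2}$ together with the sign analysis of $q=pn+1$. Note only that, as in Theorem \ref{thm:tau}, one must take $K$ non-trivial (for the unknot, $K_{2,-1}$ is again the unknot and the first bullet would fail at $n=-1$); this hypothesis is inherited from the theorem rather than being a gap in your argument.
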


This corollary derives its power from the fact that there are several  classes of knots which we know to be contained in $\Pos$.    For instance, the following classes of knots are contained in $\Pos$:
 \begin{enumerate}
 \item Knots which bound a complex curve, $V_f\subset B^4$, satisfying $g(V_f)=g(K)$ \cite{SQPfiber}.
 \item  Positive knots i.e. those knots which admit a diagram containing only positive crossings \cite{Livingston2004}.
 \item L-space knots i.e. knots for which positive slope Dehn surgery on $K$ yields an L-space (in particular, lens space knots)  \cite{Lens}.
 \item Any non-negatively twisted, positive-clasped Whitehead double of a knot satisfying $\tau(K)>0$ \cite{Doubling}.
 \item Strongly quasipositive knots i.e. those knots bounding a Seifert surface obtained  from parallel disks by attaching bands with a positive half twist \cite{Livingston2004}  (see also \cite{SQPfiber}).
 \item Fibered knots whose associated contact structure is tight \cite{SQPfiber}.
 \end{enumerate}
These classes overlap highly.  For instance, $(2)\subset (5)\subset (1)$, and $(3)\subset (6) \subset (5)$  (see \cite{SQPfiber} for a discussion of these inclusions.)     
Combining Corollary \ref{cor:pos} with $(1)$ yields Corollary \ref{cor:cknot} of the introduction.  Combining with $(3)$ yields Corollary \ref{cor:cor2}, and $(6)$ is instrumental in the results of \cite{ComplexCable}.  Item $(2$) yields an obstruction for cabling to produce positive knots:

\begin{cor} Suppose $K\notin \Pos$.  Then $K_{p,pn+1}$ is not a positive knot for any $n\in \Z$ \end{cor}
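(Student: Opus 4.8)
The plan is to argue by contraposition, leveraging the inclusion of positive knots into $\Pos$ recorded as item $(2)$ above together with the second bullet of Corollary \ref{cor:pos}. First I would suppose, for contradiction, that $K_{p,pn+1}$ is a positive knot for some $n\in \Z$. By item $(2)$ in the list (the result of Livingston \cite{Livingston2004} that positive knots satisfy $\tau=g$), this would force $K_{p,pn+1}\in \Pos$, i.e.
$$\tau(K_{p,pn+1})=g(K_{p,pn+1}).$$

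On the other hand, the hypothesis $K\notin \Pos$ together with the second bullet of Corollary \ref{cor:pos} asserts that $K_{p,pn+1}\notin \Pos$ for \emph{every} $n$. These two conclusions are incompatible, so no such $n$ can exist, which is exactly the claim. The entire content is therefore a short combination of two already-established facts, and there is essentially no obstacle remaining once Corollary \ref{cor:pos} is in hand; the work is all hidden in Theorem \ref{thm:tau} (which underlies Corollary \ref{cor:pos}) and in the external input that positive knots lie in $\Pos$. The one point I would be careful to state explicitly is that the positivity of the cable is what places it in $\Pos$, so that the genuinely effective obstruction is the invariant $\tau$ detecting failure of the equality $\tau=g$ under cabling, rather than any property of positive diagrams per se.
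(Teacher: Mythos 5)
Your argument is correct and is precisely the one the paper intends: the corollary is stated as the immediate combination of item $(2)$ (positive knots lie in $\Pos$, by Livingston) with the second bullet of Corollary \ref{cor:pos} ($K\notin\Pos$ forces $K_{p,pn+1}\notin\Pos$ for all $n$). There is no difference in approach and no gap.
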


Item $(5)$ produces the most precise information to date on the smooth four-genera of knots obtained by iterated doubling and cabling:

\begin{cor} Suppose $\tau(K)>0$.  Then any knot, $S$, obtained by an arbitrary sequence of non-negative cabling and Whitehead double operations is in $\Pos$.  In particular, $g_4(S)=g(S)$.
\end{cor}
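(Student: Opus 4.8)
The plan is to argue by induction on the length of the sequence of operations that produces $S$ from $K$, propagating two facts simultaneously: that the knot at each stage has strictly positive $\tau$, and that (once a Whitehead double has occurred) it lies in $\Pos$. The two inductive steps correspond to the two allowed operations, and each is controlled by one of the inputs already available. For a non-negative cable $J\mapsto J_{p,pn+1}$ with $n\ge 0$, the lower bound of Theorem \ref{thm:tau}, namely $\tau(J_{p,pn+1})\ge p\,\tau(J)+\frac{(pn)(p-1)}{2}$, shows that $\tau(J)\ge 1$ forces $\tau(J_{p,pn+1})\ge p\ge 2>0$ (note $J$ is non-trivial whenever $\tau(J)>0$, so the theorem applies); and Corollary \ref{cor:pos} shows that $J\in\Pos$ implies $J_{p,pn+1}\in\Pos$. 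Thus non-negative cabling preserves both conditions.

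For the Whitehead double step I would invoke item $(4)$ of the list above, from \cite{Doubling}: a non-negatively twisted, positive-clasped Whitehead double of a knot $J$ with $\tau(J)>0$ has $\tau=1$, and since any such double bounds the obvious genus-one Seifert surface, it satisfies $\tau=g=1>0$ and therefore lies in $\Pos$. Hence the doubling operation both keeps $\tau$ positive and lands in $\Pos$. The crucial observation is that positivity of $\tau$ is propagated along the entire sequence by the two steps above, so that the hypothesis of item $(4)$ remains satisfied at \emph{every} doubling stage; membership in $\Pos$ is then established by any Whitehead double and carried forward through the subsequent non-negative cablings by Corollary \ref{cor:pos}. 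Reading the sequence from the last Whitehead double onward therefore yields $S\in\Pos$.

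Finally, once $S\in\Pos$ we have $\tau(S)=g(S)$, and combining this with the general inequalities $\tau(S)\le g_4(S)\le g(S)$ (the first from \cite{FourBall}, the second because the smooth four-ball genus never exceeds the Seifert genus) forces $g_4(S)=g(S)$, as claimed. I expect the main obstacle, and the real content of the argument, to be exactly the persistence of $\tau>0$ along the whole sequence: this is what keeps item $(4)$ applicable at each doubling and is guaranteed for the cabling steps precisely by the sharp lower bound of Theorem \ref{thm:tau}. Everything else is the bookkeeping of the induction.
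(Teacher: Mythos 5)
Your argument is correct and is essentially the paper's intended one: the paper offers no written proof beyond pointing to the list preceding the corollary, and the induction you describe --- item $(4)$ for each doubling step, Corollary \ref{cor:pos} for each cabling step, with the lower bound of Theorem \ref{thm:tau} propagating $\tau>0$ so that item $(4)$ remains applicable at every doubling --- is exactly what is being invoked. One remark worth recording: as you implicitly observe, membership in $\Pos$ is only ever \emph{created} by a Whitehead double (a non-negative cabling applied to a knot with $\tau>0$ but $\tau\ne g$ produces a knot not in $\Pos$, by the second bullet of Corollary \ref{cor:pos}), so the corollary must be read as requiring at least one doubling in the sequence; your proof correctly isolates where $\Pos$-membership enters, a point the paper's statement elides.
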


We have seen that understanding when  $K_{p,pn+1}\in \Pos$ has geometric consequences for cabling.  Likewise, understanding  when $\tau(K_{p,pn+1})\ge 0$ is also tied to geometry.  In this case, Theorem \ref{thm:tau} yields:

\begin{cor}
Suppose $\tau(K_{p,pn+1})\ge 0$.  Then $n\ge -2\left(\frac{\tau(K)}{p-1}+ \frac{1}{p}\right)$.
\end{cor}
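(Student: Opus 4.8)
The plan is to derive this immediately from the two-sided bound in Theorem \ref{thm:tau}, observing that the nonnegativity hypothesis $\tau(K_{p,pn+1}) \ge 0$ is of exactly the right shape to be played off against the \emph{upper} bound there. First I would recall that Theorem \ref{thm:tau} asserts, for all $n$ (with $p \ge 2$, so that the quantity $p-1$ appearing below is positive),
$$\tau(K_{p,pn+1}) \le p\tau(K) + \frac{(pn)(p-1)}{2} + p-1.$$
Combining this with the assumption $\tau(K_{p,pn+1}) \ge 0$ gives the single linear inequality
$$0 \le p\tau(K) + \frac{(pn)(p-1)}{2} + p-1.$$

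The remaining step is pure algebra. I would isolate the term containing $n$ to obtain $\frac{(pn)(p-1)}{2} \ge -p\tau(K) - (p-1)$, then divide through by the positive quantity $\frac{p(p-1)}{2}$, which preserves the direction of the inequality precisely because $p-1 > 0$. This produces
$$n \ge \frac{-2\tau(K)}{p-1} - \frac{2}{p} = -2\left(\frac{\tau(K)}{p-1} + \frac{1}{p}\right),$$
which is exactly the claimed bound.

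There is no genuine obstacle in this argument; the only points that warrant attention are bookkeeping ones. I would take care to use the correct side of the two-sided estimate: since the hypothesis bounds $\tau(K_{p,pn+1})$ from below by $0$ and Theorem \ref{thm:tau} bounds it from above, it is the upper bound that converts into a lower bound on $n$ (intuitively, making $n$ very negative drives even the upper bound below $0$, contradicting $\tau(K_{p,pn+1}) \ge 0$). I would also remark that this statement is precisely the mechanism behind Corollary \ref{cor:cor1}: a $\C$-knot satisfies $\tau = g_4 \ge 0$, so Corollary \ref{cor:cor1} is the specialization of the present corollary to $K_{p,pn+1}$ a $\C$-knot.
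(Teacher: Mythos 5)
Your proposal is correct and is exactly the argument the paper intends: the corollary is obtained by combining the hypothesis $\tau(K_{p,pn+1})\ge 0$ with the upper bound of Theorem \ref{thm:tau} and solving the resulting linear inequality for $n$. Your closing remark about Corollary \ref{cor:cor1} also matches the paper, which derives that corollary by specializing to $\C$-knots, for which $\tau(K)=g_4(K)\ge 0$.
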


This is relevant in light of the connection between $\tau(K)$ and complex curves.   Suppose that $K$ is a $\C$-knot.  Then results of  \cite{Olga2004,SQPfiber,Rudolph1983}  show that $\tau(K)=g_4(K)$.   In particular, $\tau(K)\ge 0$.  Combining this fact with the corollary yields Corollary \ref{cor:cor1} of the introduction.  

Corollary \ref{cor:cor1} could alternatively be stated in terms of the braid group.  Let $B_n$ denoted the braid group on $n$ strands, with generators $\sigma_1,\ldots,\sigma_{n-1}$.  A {\em quasipositive}  knot is any knot which can be realized as the closure of a braid of the form:
$$\beta = \Pi_{k=1}^m w_k \sigma_{i_k} w_k^{-1}.$$

 Results of \cite{Rudolph1983} and \cite{BO2001} indicate that $\C$-knots are exactly the class of knots obtained as the closures of quasipositive braids.  Thus, we obtain
 
 \begin{cor}
 Suppose $K_{p,pn+1}$ is the closure of a quasipositive braid.  Then $$n\ge -2(\frac{\tau(K)}{p-1}+ \frac{1}{p}).$$
 \end{cor}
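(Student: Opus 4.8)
The plan is to recognize that this corollary is, up to terminology, a restatement of Corollary \ref{cor:cor1}, bridged by the dictionary between quasipositive braids and $\C$-knots. First I would invoke the theorem of Rudolph \cite{Rudolph1983} and Boileau--Orevkov \cite{BO2001}, recalled immediately above the statement, which identifies the closures of quasipositive braids with the class of $\C$-knots. By hypothesis $K_{p,pn+1}$ is the closure of a quasipositive braid, so this correspondence places $K_{p,pn+1}$ among the $\C$-knots.

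With $K_{p,pn+1}$ now known to be a $\C$-knot, the conclusion is immediate from Corollary \ref{cor:cor1}. To spell out a self-contained version, I would next import the results of \cite{Olga2004,SQPfiber,Rudolph1983} that any $\C$-knot $J$ satisfies $\tau(J)=g_4(J)$, and in particular $\tau(K_{p,pn+1})\ge 0$. Substituting this into the upper bound of Theorem \ref{thm:tau},
$$\tau(K_{p,pn+1})\le p\tau(K)+\frac{(pn)(p-1)}{2}+p-1,$$
gives $0\le p\tau(K)+\frac{(pn)(p-1)}{2}+p-1$. Rearranging and dividing by the positive quantity $p(p-1)$ then yields exactly $n\ge -2\left(\frac{\tau(K)}{p-1}+\frac{1}{p}\right)$. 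This is precisely the content of the preceding corollary (the one assuming $\tau(K_{p,pn+1})\ge 0$), so one may equally well simply cite that corollary and skip reproducing the algebra.

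There is no genuine obstacle here: every ingredient is a black box drawn from the cited literature, and the combination is formal. The only point requiring care is to confirm that the correspondence of \cite{Rudolph1983,BO2001} and the identity $\tau=g_4$ of \cite{Olga2004,SQPfiber,Rudolph1983} are being applied to the cable $K_{p,pn+1}$ itself, not to the companion $K$. Since both are universal statements about knots in $S^3$, nothing further needs checking; the subtlety, such as it is, lies merely in tracking which knot plays which role.
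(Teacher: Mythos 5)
Your proposal matches the paper's argument exactly: the paper likewise cites \cite{Rudolph1983} and \cite{BO2001} to identify quasipositive braid closures with $\C$-knots and then applies Corollary \ref{cor:cor1} (itself derived from $\tau = g_4 \ge 0$ for $\C$-knots together with the upper bound of Theorem \ref{thm:tau}). Your algebra and your care about applying the $\C$-knot results to the cable rather than the companion are both correct.
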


\bigskip

\noindent We conclude with a proof of Corollary \ref{cor:concordance}. 
 
\bigskip 
\noindent{\bf Proof of Corollary \ref{cor:concordance}.} We wish to show that $\phi_{p,1}$ is not a homomorphism, so we must find knots $K_1$ and $K_2$ for which $$\phi_{p,1}([K_1\#K_2])\ne [\phi_{p,1}(K_1)\#\phi_{p,1}(K_2)].$$  To do this, it suffices to show that $$\tau^p(K_1\#K_2,1)\ne \tau^p(K_1,1) + \tau^p(K_2,1),$$   This is accomplished with $$K_1=\mathrm{right\sd handed \ trefoil}$$ $$K_2=\mathrm{left\sd handed \ trefoil}$$
   $K_1\#K_2$ is slice, and so $\tau^p(K_1\#K_2,1)=\tau^p(U,1)=0,$ where $U$ is the unknot.  Now $\tau(K_1)=g(K_1)=1$, so Theorem \ref{thm:tau} implies $\tau^p(K_1,1)=p\tau(K_1)=p$.   For $K_2$, however, we have $\tau(K_2)=-g(K_2)=-1$.  Thus $\tau^p(K_2,1)=p\tau(K_2)+p-1=-1$ (again by Theorem \ref{thm:tau}).  This completes the proof.   \ \ \ \ \ \ \ \ \ \ \ \ \ \ \ \ \ \ \ \ \ \ \ \ \ \ \  \ \ \ \ \ \ \ \ \ \ \ \ \ \ \ \ \ \ \ \ \ \  \ \ \ \ \ \ \ \ \ \ \ \ \ \ \ \ \ \ \ \ \ \ \ \ \ \ \ \ \ \ \ \ \ \ \ \ \ \ \ \ \  \ \ \ \ \ \ \ \ \ \ \ \ \ \ $\square$

\bibliographystyle{plain}
\bibliography{mybib}

\end{document}